\documentclass[a4paper, 11pt, english]{article}

\usepackage{lscape}
\usepackage{pdflscape}

\usepackage{stmaryrd}

\usepackage{cmll}

\usepackage{amscd}
\usepackage{amssymb,amsmath,amsthm}
\usepackage{mathptmx}
\usepackage{mathrsfs}
\usepackage{color}
\usepackage{xspace}
\usepackage{bussproofs}
\EnableBpAbbreviations

\usepackage{tikz}

  \usepackage{txfonts}

\usepackage{bpextra}

\usepackage{enumerate}

\usepackage{centernot}

\usepackage{mathtools}

\usepackage{hyperref}

\usepackage{cleveref}

\usepackage{relsize}

\usepackage{caption}

\usepackage{multirow}

\usepackage{multicol}

\usepackage{array}
\newcolumntype{C}[1]{>{\centering\arraybackslash}p{#1}}
\newcolumntype{L}[1]{>{\arraybackslash}p{#1}}

\usepackage{comment}
\usepackage{rotating}

\newtheorem{theorem}{Theorem}[section]

\newtheorem{proposition}[theorem]{Proposition}

\newtheorem{definition}[theorem]{Definition}

\def\fCenter{{\mbox{$\ \vdash\ $}}}



\newcommand{\fns}{\footnotesize}

\newcommand{\marginnote}[1]{\marginpar{\raggedright\tiny{#1}}}

\def\mc{\multicolumn}

\newcommand{\mneg}{\ensuremath{^{\bot}}\xspace}







\newcommand{\ATOPBOT}{\ensuremath{\textrm{I}}\xspace}
\newcommand{\WCIRCW}{\ensuremath{\circ}\xspace}
\newcommand{\BCIRCB}{\ensuremath{\bullet}\xspace}
\newcommand{\wboxw}{\ensuremath{\Box}\xspace}
\newcommand{\wdiaw}{\ensuremath{\Diamond}\xspace}
\newcommand{\bboxb}{\ensuremath{\blacksquare}\xspace}
\newcommand{\bdiab}{\ensuremath{\Diamondblack}\xspace}
















\newcommand{\andl}{\ensuremath{\wedge}\xspace}
\newcommand{\topl}{\ensuremath{\top}\xspace}
\newcommand{\orl}{\ensuremath{\vee}\xspace}
\newcommand{\botl}{\ensuremath{\bot}\xspace}
\newcommand{\rarrl}{\ensuremath{\rightarrow}\xspace}

\def\drarrl{\mbox{$\,>\mkern-7mu\raisebox{-0.065ex}{\rule[0.5865ex]{1.38ex}{0.1ex}}\,$}}

\newcommand{\neglg}{\ensuremath{\neg}\xspace}
\newcommand{\neglf}{\ensuremath{{\sim}}\xspace}
\newcommand{\ANDORL}{\ensuremath{\,,}\xspace}
\newcommand{\TOPBOTL}{\ensuremath{\textrm{I}}\xspace}

\newcommand{\NEGLG}{\ensuremath{\ast}\xspace}
\newcommand{\NEGLF}{\ensuremath{\circledast}\xspace}
\newcommand{\ANDORDL}{\ensuremath{\,;}\xspace}
\newcommand{\ARRDL}{\ensuremath{\sqsupset}\xspace}


\newcommand{\ptopp}{\ensuremath{\wp}\xspace}
\newcommand{\pbotp}{\ensuremath{\varnothing}\xspace}
\newcommand{\pandp}{\ensuremath{\cap}\xspace}

\newcommand{\porp}{\ensuremath{\cup}\xspace}


\newcommand{\PTOPBOTP}{\ensuremath{\circledS}\xspace}
\newcommand{\PANDORP}{\ensuremath{\centerdot}\xspace}
\newcommand{\PARRP}{\ensuremath{\supset}\xspace}
%
\newcommand{\pwdiawp}{\ensuremath{\Diamond}\xspace}
\newcommand{\pbdiabp}{\ensuremath{\Diamondblack}\xspace}
\newcommand{\pbboxbp}{\ensuremath{\blacksquare}\xspace}
\newcommand{\pwboxwp}{\ensuremath{\Box}\xspace}
\newcommand{\PWCIRCWP}{\ensuremath{\circ}\xspace}
\newcommand{\PBCIRCBP}{\ensuremath{\bullet}\xspace}


\newcommand{\ptop}{\ensuremath{\wp}\xspace}
\newcommand{\pbot}{\ensuremath{\varnothing}\xspace}
\newcommand{\pand}{\ensuremath{\cap}\xspace}
\newcommand{\prarr}{\ensuremath{\,{\raisebox{-0.065ex}{\rule[0.5865ex]{1.38ex}{0.1ex}}\mkern-5mu\supset}\,}\xspace}
\newcommand{\por}{\ensuremath{\cup}\xspace}
\newcommand{\pdrarr}{\ensuremath{\,{\supset\mkern-5.5mu\raisebox{-0.065ex}{\rule[0.5865ex]{1.38ex}{0.1ex}}}\,}\xspace}


\newcommand{\PTOPBOT}{\ensuremath{\circledS}\xspace}
\newcommand{\PANDOR}{\ensuremath{\centerdot}\xspace}
\newcommand{\PARR}{\ensuremath{\supset}\xspace}
%
\newcommand{\pwbox}{\ensuremath{\Box}\xspace}

\newcommand{\pbdia}{\ensuremath{\Diamondblack}\xspace}
\newcommand{\PWCIRC}{\ensuremath{\circ}\xspace}
\newcommand{\PBCIRC}{\ensuremath{\bullet}\xspace}



\newcommand{\topp}{\ensuremath{\wp^\mathrm{op}}\xspace}
\newcommand{\botp}{\ensuremath{\varnothing^\mathrm{op}}\xspace}
\newcommand{\andp}{\ensuremath{\cap^\mathrm{op}}\xspace}
\newcommand{\rarrp}{\ensuremath{\,{\raisebox{-0.065ex}{\rule[0.5865ex]{1.38ex}{0.1ex}}\mkern-5mu\supset^\mathrm{op}}\,}\xspace}
\newcommand{\orp}{\ensuremath{\cup^\mathrm{op}}\xspace}
\newcommand{\drarrp}{\ensuremath{\,{\supset\mkern-5.5mu\raisebox{-0.065ex}{\rule[0.5865ex]{1.38ex}{0.1ex}}^\mathrm{op}}\,}\xspace}


\newcommand{\TOPBOTP}{\ensuremath{\circledS^\mathrm{op}}\xspace}
\newcommand{\ANDORP}{\ensuremath{\centerdot^\mathrm{op}}\xspace}
\newcommand{\ARRP}{\ensuremath{\supset^\mathrm{op}}\xspace}
%
\newcommand{\diawp}{\ensuremath{\Diamond^\mathrm{op}}\xspace}

\newcommand{\boxbp}{\ensuremath{\blacksquare^\mathrm{op}}\xspace}

\newcommand{\CIRCWP}{\ensuremath{\circ^\mathrm{op}}\xspace}





\usetikzlibrary{arrows}
\usetikzlibrary{matrix}
\usetikzlibrary{patterns}
\usetikzlibrary{shapes}
\makeindex

\title{Lattice Logic Properly Displayed}

\author{Giuseppe Greco \and
Alessandra Palmigiano\thanks{This research is supported by the NWO Vidi grant 016.138.314, by the NWO Aspasia grant 015.008.054, and by a Delft Technology Fellowship awarded to the second author in 2013.}
}
\date{}

\begin{document}

\maketitle

\begin{abstract}
We introduce a {\em proper} display calculus for (non-distributive) Lattice Logic which is sound, complete, conservative, and enjoys cut-elimination and subformula property. Properness (i.e.~closure under uniform substitution of all parametric parts in rules) is the main interest and added value of the present proposal, and allows for the smoothest Belnap-style proof of cut-elimination. Our proposal builds on an algebraic and order-theoretic analysis of the semantic environment of lattice logic, and applies the guidelines of the multi-type methodology in the design of display calculi.\\
{\em Keywords}: Lattice logic, substructural logics, algebraic proof theory,  sequent calculi, cut elimination, display calculi, multi-type calculi.\\
{\em 2010 Math.~Subj.~Class.} 03F52, 03F05, 03G10, 06A15, 06B15, 08A68, 18A40.
\end{abstract}

\tableofcontents

\section{Introduction}

 In the present paper, a proper (multi-type) display calculus is introduced  for {\em lattice logic}, by which we indicate the $\{\wedge, \vee, \top, \bot\}$-fragment of classical propositional logic without distributivity. This work is motivated by and embeds in a more general theory---that of the so-called {\em proper  multi-type calculi}, introduced in \cite{LORI,Multitype,PDL} and further developed  in \cite{Trends,LoRC,Inquisitive,GP:linear}---which aims at creating a  proof-theoretic environment designed on the basis of algebraic and order-theoretic insights, and encompassing in a uniform and modular way a very wide range of non-classical logics, spanning from logics such dynamic epistemic logic, PDL, and inquisitive logic to lattice-based substructural (modal) logics.

Proper multi-type calculi are a natural generalization of Belnap's display calculi \cite{Be82} (later refined by Wansing's notion of proper display calculi \cite{wansing}), the salient features of which they inherit. Like display calculi, proper multi-type calculi uniformly verify the assumptions of a Belnap-style cut elimination metatheorem, which guarantees that a uniform reduction strategy for cut elimination  can be applied to each of them. The uniform applicability of one and the same reduction strategy is due, both for  display calculi and proper multi-type calculi, to a neat separation of roles enforced between introduction rules for logical connectives and structural rules. Indeed,  introduction rules are defined following a very uniform and rigid design (the so-called {\em multiplicative} form)
which only allows to capture the most basic information on the polarity of each coordinate of each logical connective. The uniformity of this design  is key to  achieving a uniform formulation of the so-called `parametric step' in the cut-elimination procedure. Indeed, it is precisely what guarantees that a given application of the cut rule in which at least one cut formula is not principal can be `moved upwards', without reducing the complexity of the cut formula,  by inserting new cuts  where the parametric cut formula has been {\em introduced}. However, if all introduction rules are to verify one and the same  design, the information on the distinctive features of each individual connective must be encoded somewhere else. Encoding the behaviour specific to each connective, as well as the information about how the connectives interact, is the specific task of the structural rules. The design of the structural rules is also required to satisfy certain  {\em analyticity} conditions, the definition of which is  motivated as well by the metatheorem. The extra expressivity needed to encode the information on the specific logic purely at the structural level is guaranteed by  
a richer language which includes  {\em structural} connectives as well as {\em logical} connectives. Typically, in  display calculi, each logical connective has a structural counterpart, which encodes its behaviour 
at a purely structural level.


However, in most calculi for (general) lattice-based logics \cite{SambinBattilottiFaggian2000,NegriVonPlato2004}, including display calculi \cite{Be90}, the introduction rules for conjunction and disjunction  are given  in so-called {\em additive} form, which, unlike the multiplicative form, does not involve structural counterparts of conjunction and disjunction in its formulation. 
The reason for the non-standard treatment of conjunction and disjunction in the setting of display calculi is the following trade-off:   introducing the structural counterpart of these connectives would require the addition of certain rules (the {\em display postulates}) in order to enforce a property (the {\em display property}, from which these calculi are named) which is key to the satisfaction of one of the assumptions of the cut elimination metatheorem; however, the addition of display postulates would make it possible for the resulting calculus to derive the unwanted distributivity axioms as theorems. So, the need to block the derivation of distributivity is at the root of the non-standard design choice of having logical connectives without their structural counterpart (cf.\ \cite{Be93}).

However, as hinted above, from the point of view of the development of a general theory, this choice yields significant disadvantages. In particular, one loses the possibility of expressing the interactions between conjunction and disjunction and (possibly) other connectives  at the structural level, by means of {\em analytic} structural rules.  The remarkable property of these rules is that they can be safely and modularly added to a proper multi-type calculus so as to preserve its cut elimination theorem.  The loss in expressive power is all the more a disadvantage, because a uniform  theory of analytic extensions of proper multi-type calculi is being developed \cite{CoPa:Multi-type}, thanks to the systematic connections established in \cite{GMPTZ} between proper display calculi and  the algebraic theory of unified correspondence
\cite{CoGhPa14,CoPa12,ConPalSou,CFPS15,CoCr14,CPSZ,CPZ:Trans,ConRob,FrPaSa16,PaSoZh15,PaSoZh16,LeRoux:MThesis:2016,MZ16}
 (which is also available for substructural logics and other logics algebraically captured by general lattice expansions, cf.\ \cite{CoPa:non-dist,CP:constructive,CCPZ,CFPPTW}). These connections  have  made it possible to  characterize  the syntactic shape of axioms (the so-called {\em analytic inductive} axioms) which can be equivalently translated into analytic rules of a proper display calculus. Thus, having conjunction and disjunction as logical connectives without their structural counterpart blocks the  access to the benefits of a general and modular proof theory of analytic extensions of lattice-based logics.

The proper display calculus for the logic of  lattices discussed in the present talk enjoys the {\em full display} property,  and all its introduction rules are given in the standard {\em multiplicative} form. This is made possible thanks to the introduction of a richer, {\em multi-type} language for lattice logic which is motivated and justified semantically  by the well known  double representation theorem of any complete lattice as  sub $\bigcap$-semilattice of some powerset algebra (i.e.\ as the $\bigcap$-semilattice of the closed sets of a closure operator on that powerset algebra) and as  sub $\bigcup$-semilattice of some powerset algebra (i.e.\ as the $\bigcup$-semilattice of the open sets of an interior operator on that powerset algebra).  Each of these powerset algebras provide the semantics for  a different {\em type}, and their interaction with the original complete lattice is given as pairs of adjoint connectives, the composition of which yields the closure operator and the interior operator of the double representation. The proof-theoretic behaviour of the adjoint connectives is that of standard normal modal operators. In the multi-type environment, the interpretation of the sequents of the Hilbert-style axiomatization of lattice logic is then obtained via two translations, the soundness of which is justified by the double representations. The translated axiomatization of lattice logic is then derived in the multi-type proof calculus.  The metatheory of this calculus is smooth and encompassed in a general theory (cf.\ \cite{CoPa:Multi-type,Trends,CR}), so that one obtains soundness, completeness, conservativity and cut-elimination as easy corollaries of general facts.

\paragraph{Structure of the paper. } In Section \ref{sec:preliminaries}, we briefly report on a Hilbert-style presentation of  lattice logic and its algebraic semantics, and discuss the issue of a modular account of its axiomatic extensions and expansions. In Section \ref{sec:heterogeneous algebras for complete lattices}, we report on well known order-theoretic facts related with the representation of complete lattices, which help to introduce an equivalent multi-type semantic environment for lattice logic. In Section \ref{sec: multi-type language}, we introduce the multi-type language naturally associated with the semantic environment of the previous section. In Section \ref{MultiTypeDisplayCalculus}, we introduce the multi-type calculus D.LL for lattice logic which constitutes the core contribution of the present paper. In Section \ref{sec:properties}, we discuss the basic properties verified by D.LL, namely, soundness, completeness, cut-elimination, subformula property, and conservativity. In Section \ref{sec:distrib fails}, we prove syntactically that (the translation of) the distributivity axiom is not derivable in D.LL.

\section{Lattice logic and its single-type proof theory}
\label{sec:preliminaries}

\subsection{Hilbert-style presentation of lattice logic and its algebraic semantics}
\label{ssec:hilbert axioms}
 Formulas of  the language of lattice logic $\mathcal{L}$ over a set $\mathsf{AtProp}$ of atomic propositions are generated as follows: 

$$a ::= \,p \mid \topl \mid \botl \mid A \andl A \mid A \orl A.$$

The Hilbert-style presentation of lattice logic consists of the following axioms
\begin{align*}
				&A\vdash A, && \bot\vdash A, && A\vdash \top, & &\\ 
				&A\vdash A\vee B, && B\vdash A\vee B, && A\wedge B\vdash A, && A\wedge B\vdash B, &
			\end{align*}
and the following rules:
\begin{displaymath}
			\frac{A\vdash B\quad B\vdash C}{A\vdash C}
			\quad
			\frac{A\vdash B}{A[C/p]\vdash B[C/p]}
			\quad
			\frac{A\vdash B\quad A\vdash C}{A\vdash B\andl C}
			\quad
			\frac{A\vdash C\quad B\vdash C}{A\orl B\vdash C}
		\end{displaymath}
The algebraic semantics of lattice logic is given by the class of {\em bounded lattices} (cf.\ \cite{Birkhoff, Burris-Sankappanavar}), i.e.\ $(2, 2, 0,0)$-algebras $\mathbb{A} = (X, \andl, \orl, \topl, \botl)$ validating the following identities:
\begin{center}
\begin{tabular}{rlrl}
         & Commutative laws                          &          & Associative laws                                                            \\
cC. & $a \andl b = b \andl a$  & cA.   & $a \andl (b \andl c) = (a \andl b) \andl c$  \\
dC. & $a \orl b = b \orl a$       & dA.   & $a \orl (b \orl c) = (a \orl b) \orl c$              \\
        & Identity laws                                    &         & Absorption laws                                                                \\
cI.  & $a \andl \topl = a$         & cAb. & $a \andl (a \orl b) = a$                                  \\
dI.  & $a \orl \botl = a$            & dAb. & $a \orl (a \andl b) = a$                                  \\
\end{tabular}
\end{center}

A  bounded  lattice is {\em distributive} if it validates the following identities:

\begin{center}
\begin{tabular}{rl}
      & Distributivity laws                                                                 \\
cD. & $a \andl (b \orl c) = (a \andl b) \orl (a \orl c)$ \\
dD. & $a \orl (b \andl c) = (a \orl b) \andl (a \orl c)$ \\
\end{tabular}
\end{center}

 A bounded lattice is {\em residuated}  (cf.\ \cite{GJKO}) if the condition (cR) below holds, and is {\em dually residuated} if the condition (dR) holds. If a lattice is (dually) residuated then is distributive.  

\begin{center}
\begin{tabular}{rlclrlcl}
       & \mc{3}{l}{Residuation laws}                                    \\
cR. & $a \andl b \leq c$ &iff& $b \leq a \rarrl c$   \\ 
dR. & $a \leq b \orl c$    &iff& $b \drarrl a \leq c$ \\ 

\end{tabular}
\end{center}

\subsection{Towards a modular  proof theory for lattice logic}
In order to motivate the proposal of a calculus for lattice logic which we will introduce in Section \ref{MultiTypeDisplayCalculus}, we find it useful to start by discussing the properties of the following basic Gentzen-style sequent calculus for lattice logic (cf.\ e.g.\ \cite{TS00}):
\begin{itemize}
\item Identity and Cut rules
\end{itemize}

\begin{center}
\begin{tabular}{rl}
\AXC{$\phantom{p \fCenter p}$}
\RightLabel{\fns $Id$}
\UI$p \fCenter p$
\DisplayProof
 &
\AX$X \fCenter A$
\AX$A \fCenter Y$
\RightLabel{\fns $Cut$}
\BI$X \fCenter Y$
\DisplayProof
 \\
\end{tabular}
\end{center}

\begin{itemize}
\item Operational rules
\end{itemize}

\begin{center}
\begin{tabular}{rl}
\AXC{\phantom{$\botl \fCenter $}}
\LeftLabel{\fns $\botl$}
\UI$\botl \fCenter \TOPBOTL$
\DisplayProof
 &
\AX$X \fCenter \TOPBOTL$
\RightLabel{\fns $\botl$}
\UI$X \fCenter \botl$
\DisplayProof
 \\

 & \\

\AX$\TOPBOTL \fCenter X$
\LeftLabel{\fns $\topl$}
\UI$\topl \fCenter X$
\DisplayProof
 &
\AXC{\phantom{$ \fCenter \topl$}}
\RightLabel{\fns $\topl$}
\UI$\TOPBOTL \fCenter \topl$
\DisplayProof
 \\

 & \\

\AX$A_i \fCenter X$
\LeftLabel{\fns $\andl_i$}
\UI$A_1 \andl A_2 \fCenter X$
\DisplayProof
 &
\AX$X \fCenter A$
\AX$X \fCenter B$
\RightLabel{\fns $\andl$}
\BI$X \fCenter A \andl B$
\DisplayProof
\ \, \\

 & \\
\AX$A \fCenter X$
\AX$B \fCenter X$
\LeftLabel{\fns $\orl$}
\BI$A \orl B \fCenter X$
\DisplayProof
 &
\AX$X \fCenter A_i$
\RightLabel{\fns $\orl_i$}
\UI$X \fCenter A_1 \orl A_2$
\DisplayProof
 \\
 & \\
\mc{2}{c}{where $i \in\{1,2\}$}.
 \\
\end{tabular}
\end{center}
The calculus above, which we refer to as L0, is sound  w.r.t.\ the class of lattices, complete w.r.t.\ the Hilbert-style presentation of lattice logic, and verifies  cut-elimination. Hence, L0 is perfectly adequate as a proof calculus for lattice logic, when this logic is regarded in isolation. However, the main interest of lattice logic lays in it serving as base for a variety of  logics, which are either its {\em axiomatic extensions}  (e.g.~the logics of modular and distributive bounded lattices and their variations \cite{Huhn}), or its  proper {\em language-expansions}  (e.g.~the full Lambek calculus \cite{GJKO}, bilattice logic \cite{Be77}, orthologic \cite{Goldblatt},  linear logic \cite{Girard:TCS}). Hence, it is sensible to require of an adequate proof theory of lattice logic to be able to account in a modular way for these logics as well. The calculus L0 does not seem to be a good starting point for this purpose. Indeed,   axiomatic extensions of lattice logic can be supported by L0 by adding suitable axioms. For instance, modular and distributive lattice logic can be respectively captured by adding the following axioms to L0:
\[((C\andl B)\orl A)\andl B\vdash (C\andl B)\orl (A\andl B)\quad \quad A \andl (B \orl C) \vdash (A \andl B) \orl (A \orl C).\]
However, the cut elimination theorem needs to be proved for the resulting calculi from scratch. More in general, we lack uniform principles or proof strategies aimed at identifying axioms which can be added to L0 so that the resulting calculus still enjoys cut elimination. Another source of nonmodularity arises from the fact that L0 lacks structural rules. Indeed, the additive formulation of the introduction rules of L0 encodes the information which is stored in standard structural rules such as weakening, contraction, associativity, and exchange. Hence, one cannot use L0 as a base to capture logics aimed at `negotiating' these rules, such as the Lambek calculus \cite{La58} and other substructural logics \cite{GJKO}.
To remedy this, one can move to the following calculus, which we refer to as L1 and  which adopts the {\em visibility} principle\footnote{A sequent calculus verifies the {\em visibility} property if both the auxiliary formulas and the principal formula of each operational rule of the calculus occur in an {\em empty} context. Hence,  by design, L1 verifies the visibility property.} isolated by Sambin, Battilotti and Faggian in \cite{SambinBattilottiFaggian2000}  to formulate a general strategy for cut elimination. The visibility constraint generalizes Gentzen's seminal idea to capture intuitionistic logic with his calculus LJ by   restricting the shape of the sequents in his calculus LK for classical logic so as to admit at most one formula in succedent position \cite{Gentzen}. 
The calculus L1 has a  structural language, which  consists of one structural constant `$\TOPBOTL$' which is interpreted as $\topl$ (resp.\ $\botl$) when occurring in precedent (resp.\ succedent) position, and one binary connective `$\ANDORL$', which is interpreted  as conjunction in precedent position and disjunction in succedent position.

\begin{itemize}
\item Identity and Cut rules
\end{itemize}

\begin{center}
\begin{tabular}{ccc}
\AXC{$\phantom{p \fCenter p}$}
\RightLabel{\fns $Id$}
\UI$p \fCenter p$
\DisplayProof
 &
\AX$X \fCenter A$
\AX$(Y \fCenter Z)[A]^{pre}$
\LeftLabel{\fns L-Cut}
\BI$(X \fCenter Y)[Z/A]^{pre}$
\DisplayProof
 &
\AX$(X \fCenter Y)[A]^{succ}$
\AX$A \fCenter Z$
\RightLabel{\fns R-Cut}
\BI$(X \fCenter Y)[Z/A]^{suc}$
\DisplayProof
 \\
\end{tabular}
\end{center}

\begin{itemize}
\item Structural and operational  rules
\end{itemize}

\begin{center}
\begin{tabular}{@{}c@{}c@{}}


\mc{1}{c}{structural}
 &

\mc{1}{c}{\!\!\!\!\!\!\!\!\!\!
operational}
 \\

 & \\

\begin{tabular}{rl}
\AX$X \fCenter Y$
\doubleLine
\LeftLabel{\fns $\TOPBOTL$}
\UI$X \ANDORL \TOPBOTL \fCenter Y$
\DisplayProof
 &
\AX$X \fCenter Y$
\doubleLine
\RightLabel{\fns $\TOPBOTL$}
\UI$X \fCenter Y \ANDORL \TOPBOTL$
\DisplayProof
 \\

 & \\

\AX$X \ANDORL Y \fCenter Z$
\LeftLabel{\fns $E$}
\UI$Y \ANDORL X \fCenter Z$
\DisplayProof
 &
\AX$X \fCenter Y \ANDORL Z$
\RightLabel{\fns $E$}
\UI$X \fCenter Z \ANDORL Y$
\DisplayProof \\

 & \\

\AX$(X \ANDORL Y) \ANDORL Z \fCenter V$
\doubleLine
\LeftLabel{\fns $A$}
\UI$X \ANDORL (Y \ANDORL Z) \fCenter V$
\DisplayProof
 &
\AX$X \fCenter (Y \ANDORL Z) \ANDORL V$
\RightLabel{\fns $A$}
\doubleLine
\UI$X \fCenter Y \ANDORL (Z \ANDORL V)$
\DisplayProof
 \\

 & \\

\AX$X \fCenter Y$
\LeftLabel{\fns $W$}
\UI$X \ANDORL Z \fCenter Y$
\DisplayProof
 &
\AX$X \fCenter Y$
\RightLabel{\fns $W$}
\UI$X \fCenter Y \ANDORL Z$
\DisplayProof
 \\

 & \\

\AX$X \ANDORL X \fCenter Y$
\LeftLabel{\fns $C$}
\UI$X \fCenter Y$
\DisplayProof
 &
\AX$X \fCenter Y \ANDORL Y$
\RightLabel{\fns $C$}
\UI$X \fCenter Y$
\DisplayProof
 \\




\end{tabular}

 &
\!\!\!\!\!\!\!\!\!\!
\begin{tabular}{rl}
\AXC{\phantom{$\botl \fCenter $}}
\LeftLabel{\fns $\botl$}
\UI$\botl \fCenter \TOPBOTL$
\DisplayProof
 &
\AX$X \fCenter \TOPBOTL$
\RightLabel{\fns $\botl$}
\UI$X \fCenter \botl$
\DisplayProof
 \\

 & \\

\AX$\TOPBOTL \fCenter X$
\LeftLabel{\fns $\topl$}
\UI$\topl \fCenter X$
\DisplayProof
 &
\AXC{\phantom{$ \fCenter \topl$}}
\RightLabel{\fns $\topl$}
\UI$\TOPBOTL \fCenter \topl$
\DisplayProof
 \\

 & \\

\AX$A \ANDORL B \fCenter X$
\LeftLabel{\fns $\andl$}
\UI$A \andl B \fCenter X$
\DisplayProof
 &
\AX$X \fCenter A$
\AX$Y \fCenter B$
\RightLabel{\fns $\andl$}
\BI$X \ANDORL Y \fCenter A \andl B$
\DisplayProof
\ \, \\

 & \\

\AX$A \fCenter X$
\AX$B \fCenter Y$
\LeftLabel{\fns $\orl$}
\BI$A \orl B \fCenter X \ANDORL Y$
\DisplayProof
 &
\AX$X \fCenter A \ANDORL B$
\RightLabel{\fns $\orl$}
\UI$X \fCenter A \orl B$
\DisplayProof
 \\
\end{tabular}

\end{tabular}
\end{center}
Unlike the operational rules for L0, the operational rules for L1 are formulated in
{\em multiplicative} form,\footnote{The multiplicative form of the introduction rules is the most important aspect in which L1 departs from the calculus of \cite{SambinBattilottiFaggian2000}. Indeed, the introduction rules for conjunction and disjunction in \cite{SambinBattilottiFaggian2000} are additive.}
 which is more general than the additive.  
The more general formulation of the introduction rules implies that the structural rules of weakening, exchange, associativity, and contraction are not anymore subsumed by the introduction rules. 

The visibility of L1 blocks the derivation of the distributivity axiom. Hence, to be able to derive distributivity, one option is to relax the visibility constraint both in precedent  and in succedent position. This solution is not entirely satisfactory, and suffers from the same lack of modularity which prevents    Gentzen's move from LJ to LK to capture intermediate logics. Specifically, relaxing visibility captures the logics of Sambin's cube, but many other logics are left out. Moreover, without visibility, we do not have a uniform strategy for cut elimination. 

To conclude,  a proof theory for axiomatic extensions and expansions of general lattice logic is comparably not as modular as that of the axiomatic extensions and expansions of the logic of {\em distributive} lattices, which can rely on the theory of proper display calculi \cite{wansing, GMPTZ}. The idea guiding the approach of the present paper, which we will elaborate upon in the next sections, is that, rather than trying to work our way up starting from a calculus for lattice logic, we will  obtain a  calculus for lattice logic from the standard proper display calculus for the logic of distributive lattices, by endowing it with a suitable mechanism to block the derivation of distributivity.
\section{Multi-type semantic environment for lattice logic}
\label{sec:heterogeneous algebras for complete lattices}
In the present section, we introduce a class of {\em heterogeneous algebras} \cite{BL} which equivalently encodes complete lattices, and which will be useful to motivate the design of the calculus for lattice logic from a semantic viewpoint, as well as to establish its properties. This presentation  takes its move from very well known facts in the representation theory of complete lattices, which can be found e.g.~in \cite{DaPr, Birkhoff}, formulated---however---in terms of {\em covariant} (rather than contravariant) adjunction. For every partial order $\mathbb{Q} = (Q, \leq)$, we let $\mathbb{Q}^{op}: = (Q, \leq^{op})$, where $\leq^{op}$ denotes the converse ordering.  If $\mathbb{Q} = (Q, \wedge, \vee, \bot, \top)$ is a lattice, we let $\mathbb{Q}^{op}: = (Q, \wedge^{op}, \vee^{op}, \bot^{op}, \top^{op})$ denote the lattice induced by $\leq^{op}$. Moreover, for any $b\in Q$, we let $b{\uparrow}: = \{c\mid c\in Q\mbox{ and } b\leq c\}$ and $b{\downarrow}: = \{a\mid a\in Q\mbox{ and } a\leq b\}$.

\medskip

A {\em polarity} is a structure $\mathbb{P} = (X, Y, R)$ such that $X$ and $Y$ are sets and $R\subseteq X\times Y$. Every polarity induces a pair of maps  $\rho: \mathcal{P}(Y)^{op}\to \mathcal{P}(X)$, $\lambda: \mathcal{P}(X)\to \mathcal{P}(Y)^{op}$, respectively defined by $Y'\mapsto \{x\in X\mid \forall y(y\in Y'\rightarrow xR y) \}$ and $X'\mapsto \{y\in Y\mid \forall x(x\in X'\rightarrow xR y) \}$. It is well known (cf.~\cite{DaPr}) and easy to verify that these maps form an {\em adjunction pair}, that is, for any $X'\subseteq X$ and $Y'\subseteq Y$,
\[\lambda(X')\subseteq^{op} Y'\quad \mbox{ iff }\quad X'\subseteq \rho (Y'). \]
The map $\lambda$ is the left adjoint, and $\rho$ is the right adjoint of the pair. By general order-theoretic facts, this implies that $\lambda$ preserves arbitrary joins and $\rho$ arbitrary meets: that is, for any $S\subseteq \mathcal{P}(X)$ and any $T\subseteq \mathcal{P}(Y)$,
\begin{equation}
\label{eq:lambda join pres rho met pres}
 \lambda (\bigcup S) = \bigcup^{op}_{s\in S}\lambda(s)\quad\mbox{ and }\quad \rho (\bigcap^{op} T) = \bigcap_{t\in T} \rho (t).
 \end{equation}
 Other well known facts about adjoint pairs are that $\rho\lambda: \mathcal{P}(X)\to \mathcal{P}(X)$ is a closure operator and $\lambda\rho: \mathcal{P}(Y)^{op}\to \mathcal{P}(Y)^{op}$  an interior operator (cf.~\cite{DaPr}). Moreover,  $\lambda\rho\lambda = \lambda$, and  $\rho\lambda\rho = \rho$ (cf.~\cite{DaPr}). That is, $\lambda\rho$ restricted to  $\mathsf{Range}(\lambda)$ is the identity map, and likewise, $\rho\lambda$ restricted to $\mathsf{Range}(\rho)$ is the identity map. Hence, $\mathsf{Range}(\rho) = \mathsf{Range}(\rho\lambda)$,  $\mathsf{Range}(\lambda) = \mathsf{Range}(\lambda\rho)$ and \[\mathcal{P}(X)\supseteq \mathsf{Range}(\rho)\cong \mathsf{Range}(\lambda)\subseteq \mathcal{P}(X)^{op}.\]
 Furthermore,  $\rho\lambda$ being a closure operator on $\mathcal{P}(X)$ implies that $\mathsf{Range}(\rho)= \mathsf{Range}(\rho\lambda)$ is a complete sub $\bigcap$-semilattice of $\mathcal{P}(X)$ (cf.~\cite{DaPr}), and hence $\mathbb{L} = \mathsf{Range}(\rho)$ is endowed with a structure of complete lattice, by setting  for every $S\subseteq \mathbb{L}$,
\begin{equation}
\label{eq:def left lattice ops}
\bigwedge_{\mathbb{L}} S := \bigcap S\quad \mbox{ and }\quad \bigvee_{\mathbb{L}} S: = \rho\lambda(\bigcup S)
\end{equation}
 Likewise, $\lambda\rho$ being an interior operator on $\mathcal{P}(Y)^{op}$ implies that  $\mathsf{Range}(\lambda)$ is a complete sub $\bigcup$-semilattice of $\mathcal{P}(Y)^{op}$, and hence $\mathbb{L} = \mathsf{Range}(\lambda)$ is endowed with a structure of complete lattice, by setting
 \begin{equation}
\label{eq:def right lattice ops}\bigvee_{\mathbb{L}} T := \bigcup^{op} T\quad \mbox{ and }\quad \bigwedge_{\mathbb{L}} T: = \lambda\rho(\bigcap^{op} T)
\end{equation}
for every $T\subseteq \mathbb{L}$. Finally, for any $S\subseteq \mathsf{Range}(\rho)$,
\begin{center}
\begin{tabular}{r c l l}
$\lambda (\bigvee S)$ & $ = $& $ \lambda (\rho\lambda (\bigcup S))$ & \eqref{eq:def left lattice ops}\\
& $ = $& $\lambda (\bigcup S)$& $\lambda\rho \lambda=\lambda$\\
& $ = $& $ \bigcup^{op}_{s\in S} \lambda (s)$ & \eqref{eq:lambda join pres rho met pres} \\
& $ = $& $\bigvee_{s\in S}\lambda (s),$ & \eqref{eq:def right lattice ops}\\
 \end{tabular}
\end{center}
and
\begin{center}
\begin{tabular}{r c l l}
$\bigwedge_{s\in S}\lambda (s)$ & $ = $& $\lambda\rho(\bigcap^{op}_{s\in S}\lambda(s))$ & \eqref{eq:def right lattice ops}\\
& $ = $& $\lambda(\bigcap_{s\in S}\rho\lambda(s))$ & \eqref{eq:lambda join pres rho met pres}\\
& $ = $& $\lambda(\bigcap S)$ & $S\subseteq \mathsf{Range}(\rho)$ and $\rho\lambda\rho = \rho$\\
& $ = $& $\lambda (\bigwedge S)$, &\eqref{eq:def left lattice ops}\\
\end{tabular}
\end{center}
which shows that the restriction of $\lambda$ to $\mathsf{Range}(\rho)$ is a complete lattice homomorphism. Likewise, one can show that the restriction of $\rho$ to $\mathsf{Range}(\lambda)$ is a complete lattice homomorphism, which completes the proof that the bijection \[\mathcal{P}(X)\supseteq \mathsf{Range}(\rho)\cong \mathsf{Range}(\lambda)\subseteq \mathcal{P}(X)^{op}\] is in fact an isomorphism of complete lattices, and hence the abuse of notation is justified which we made by denoting both the lattice $\mathsf{Range}(\rho)$ and the lattice $\mathsf{Range}(\lambda)$ by $\mathbb{L}$.

Conversely, for every complete lattice $\mathbb{L}$, consider the polarity $\mathbb{P}_{\mathbb{L}}: =(L, L, \leq)$ where $L$ is the universe of  $\mathbb{L}$ and $\leq$ is the lattice order. Then the maps $\lambda: \mathcal{P}(L)\to \mathcal{P}(L)^{op}$ and $\rho: \mathcal{P}(L)^{op}\to \mathcal{P}(L)$ are respectively defined by the assignments $S\mapsto\{a\in L\mid \forall b(b\in S\rightarrow b\leq  a) \} = (\bigvee S){\uparrow}$ and $T\mapsto \{a\in L\mid \forall b(b\in T\rightarrow a\leq b) \} = (\bigwedge T){\downarrow}$ for all $S, T\subseteq L$. Since $\bigwedge ((\bigvee S){\uparrow}) = \bigvee S$ and $\bigvee((\bigwedge T){\downarrow}) = \bigwedge T$, the closure operator $\rho\lambda: \mathcal{P}(L)\to \mathcal{P}(L)$  and the interior operator $\lambda\rho: \mathcal{P}(L)^{op}\to \mathcal{P}(L)^{op}$ are respectively defined by
\begin{equation}
\label{eq:def of closure and interior ops}
S\mapsto (\bigvee S){\downarrow}\quad\mbox{ and }\quad  T\mapsto (\bigwedge T){\uparrow}.
 \end{equation} 
The lattice $\mathbb{L}$ can be mapped injectively  both into $\mathsf{Range}(\rho) = \mathsf{Range}(\rho\lambda)$ and into $\mathsf{Range}(\lambda) = \mathsf{Range}(\lambda\rho)$ by the assignments $a\mapsto  a{\downarrow}$ and $a\mapsto a{\uparrow}$ respectively. Moreover, since $\mathbb{L}$ is complete, the maps  defined by these assignments are also {\em onto} $\mathsf{Range}(\rho\lambda)$ and $\mathsf{Range}(\lambda\rho)$. Finally, for any $S\subseteq \mathbb{L}$,
\begin{center}
\begin{tabular}{r c l l}
$\bigwedge_{\mathsf{Range}(\rho)}\{a{\downarrow}\mid a\in S\}$ & $ = $& $\bigcap\{a{\downarrow}\mid a\in S\}$ & \eqref{eq:def left lattice ops}\\
& $ = $& $(\bigwedge S){\downarrow}$ & \\
&& \\
$\bigvee_{\mathsf{Range}(\rho)}\{a{\downarrow}\mid a\in S\}$ & $ = $& $\rho\lambda(\bigcup\{a{\downarrow}\mid a\in S\})$ & \eqref{eq:def left lattice ops}\\
& $ = $& $(\bigvee \bigcup\{a{\downarrow}\mid a\in S\}){\downarrow}$ & \eqref{eq:def of closure and interior ops}\\
& $ = $& $(\bigvee S){\downarrow}$, & \\
\end{tabular}
\end{center}
which completes the verification that the map $\mathbb{L}\to \mathsf{Range}(\rho)$ defined by the assignment $a\mapsto  a{\downarrow}$ is  a complete lattice isomorphism. Similarly, one verifies that the map $\mathbb{L}\to \mathsf{Range}(\lambda)$ defined by the assignment $a\mapsto  a{\uparrow}$ is a complete lattice isomorphism. The discussion so far can be summarized by the following
\begin{proposition}
Any complete lattice $\mathbb{L}$ can be identified both with the lattice of closed sets of some closure operator $c: \mathbb{D}\to \mathbb{D}$ on a complete and completely distributive lattice $\mathbb{D} = (D, \cap, \cup, \ptopp, \varnothing)$, and with the lattice of open sets of some interior operator $i: \mathbb{E}\to \mathbb{E}$ on a complete and completely distributive lattice $\mathbb{E} = (E, \sqcap, \sqcup, \Im, \emptyset)$.
\end{proposition}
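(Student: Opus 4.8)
The plan is to collect the constructions carried out in the discussion immediately preceding the statement and to verify the two small extra points they do not quite make explicit. Let $\mathbb{L}$ be a complete lattice with universe $L$ and order $\leq$. I would take $\mathbb{D}:=\mathcal{P}(L)$ (the full powerset algebra) and let $c:=\rho\lambda$ be the composite of the adjoint maps induced by the polarity $\mathbb{P}_{\mathbb{L}}=(L,L,\leq)$ introduced above; dually I would take $\mathbb{E}:=\mathcal{P}(L)^{op}$ and let $i:=\lambda\rho$. By the general facts already recalled, $c$ is a closure operator on $\mathbb{D}$ and $i$ is an interior operator on $\mathbb{E}$. Both $\mathbb{D}$ and $\mathbb{E}$ are complete lattices; $\mathbb{D}$ is moreover completely distributive since it is a powerset algebra, and $\mathbb{E}$ is completely distributive as well --- either by the self-duality of complete distributivity, or, concretely, because the complementation map $S\mapsto L\setminus S$ is an anti-isomorphism of $\mathcal{P}(L)$, hence an isomorphism $\mathcal{P}(L)^{op}\cong\mathcal{P}(L)$ onto a powerset algebra. (The generic signature $(E,\sqcap,\sqcup,\Im,\emptyset)$ of the statement is then instantiated, up to this isomorphism, by the operations of $\mathcal{P}(L)^{op}$; this abuse of notation is harmless.)

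For the first identification I would observe that the closed sets of $c=\rho\lambda$ are exactly its fixpoints, i.e.\ $\mathsf{Range}(\rho\lambda)$, which coincides with $\mathsf{Range}(\rho)$. It was already verified above that the assignment $a\mapsto a{\downarrow}$ defines a map $\mathbb{L}\to\mathsf{Range}(\rho)$ which is injective, surjective (here completeness of $\mathbb{L}$ is used, together with $\bigwedge((\bigvee S){\uparrow})=\bigvee S$ and $\rho\lambda(S)=(\bigvee S){\downarrow}$), and preserves arbitrary meets and joins --- the join case being the displayed computation rewriting $\bigvee_{\mathsf{Range}(\rho)}\{a{\downarrow}\mid a\in S\}=\rho\lambda(\bigcup\{a{\downarrow}\mid a\in S\})$ via \eqref{eq:def left lattice ops} and \eqref{eq:def of closure and interior ops}. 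Hence $\mathbb{L}$ is isomorphic, as a complete lattice, to the lattice of closed sets of $c$. Dually, the open sets of $i=\lambda\rho$ are its fixpoints $\mathsf{Range}(\lambda\rho)=\mathsf{Range}(\lambda)$, and the assignment $a\mapsto a{\uparrow}$ was shown above to be a complete lattice isomorphism $\mathbb{L}\to\mathsf{Range}(\lambda)$ by the symmetric argument; this identifies $\mathbb{L}$ with the lattice of open sets of $i$. Taking $(\mathbb{D},c)$ and $(\mathbb{E},i)$ as above then yields the claim.

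Since every routine ingredient is already in place, there is no serious obstacle here: the proof is essentially a bookkeeping exercise that assembles the two isomorphisms and the facts about $\rho\lambda$ and $\lambda\rho$. The only point that deserves to be stated with a little care is the one flagged above, namely that the statement requires the ambient algebra $\mathbb{E}$ to be \emph{completely distributive} and not merely complete, which is why one must note that $\mathcal{P}(L)^{op}$ retains this property. If one prefers to avoid mentioning $\mathcal{P}(L)^{op}$ at all, one can instead present $\mathbb{E}$ literally as the powerset algebra $\mathcal{P}(L)$ and conjugate $i$ through the complementation isomorphism, obtaining an interior operator on a genuine powerset algebra whose open sets form a copy of $\mathbb{L}$; this is cosmetic and changes nothing of substance.
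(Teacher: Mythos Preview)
Your proposal is correct and follows essentially the same approach as the paper: the proposition is intended as a summary of the preceding discussion, and you have accurately collected the relevant constructions ($\mathbb{D}=\mathcal{P}(L)$, $c=\rho\lambda$, $\mathbb{E}=\mathcal{P}(L)^{op}$, $i=\lambda\rho$) and the isomorphisms $a\mapsto a{\downarrow}$ and $a\mapsto a{\uparrow}$ already verified there. Your explicit remark that $\mathcal{P}(L)^{op}$ is completely distributive (via complementation or self-duality of complete distributivity) is a useful point the paper leaves implicit.
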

Hence, in what follows, $\mathbb{L}$ will be identified both with $\mathsf{Range}(c)$ endowed with its structure of complete lattice defined as in \eqref{eq:def left lattice ops} (replacing $\rho\lambda$ by $c$), and with $\mathsf{Range}(i)$ endowed with its  structure of complete lattice defined as in \eqref{eq:def right lattice ops} (replacing $\lambda\rho$ by $i$). Taking these identifications into account, general order-theoretic facts  (cf.~\cite[Chapter 7]{DaPr}) imply that  $c = e_{\ell}\circ \gamma$, where $\gamma: \mathbb{D}\twoheadrightarrow \mathbb{L}$ is defined by $\alpha\mapsto c(\alpha)$ and  $e_\ell: \mathbb{L}\hookrightarrow \mathbb{D}$ is the natural embedding, and moreover, these maps form an adjunction pair as follows: for any $a\in \mathbb{L}$ and any $\alpha\in \mathbb{D}$,
\[\gamma(\alpha)\leq a\quad \mbox{ iff } \quad\alpha\leq e_{\ell}(a),\]
with the additional property that $\gamma\circ e_\ell = Id_{\mathbb{L}}$. Likewise, $i = e_{r}\circ \iota$, where $\iota: \mathbb{E}\twoheadrightarrow \mathbb{L}$ is defined by $\xi\mapsto i(\xi)$ and  $e_r: \mathbb{L}\hookrightarrow \mathbb{E}$ is the natural embedding, and moreover, these maps form an adjunction pair as follows: for any $a\in \mathbb{L}$ and any $\xi\in \mathbb{E}$,
\[ e_{r}(a)\leq\xi \quad \mbox{ iff } \quad a\leq \iota(\xi),\]
with the additional property that $\iota\circ e_r = Id_{\mathbb{L}}$.
\begin{center}
\begin{tikzpicture}[node/.style={circle, draw, fill=black}, scale=1]

\node (D) at (-1.5,-1.5) {$\mathbb{D}$};
\node (L) at (1.5,-1.5) {$\mathbb{L}$};
\node (E) at (4.5,-1.5) {$\mathbb{E}$};
\node (adj) at (3,-1.1) {{\rotatebox[origin=c]{270}{$\vdash$}}};
\node (adj) at (0,-1.1) {{\rotatebox[origin=c]{90}{$\vdash$}}};
\draw [left hook->] (L)  to node[below] {$e_{\ell}$}  (D);
\draw [right hook->] (L)  to node[below] {$e_{r}$}  (E);

\draw [->>] (E) to [out=135,in=45, looseness=1]   node[above] {$\iota$}  (L);
\draw [<<-] (L) to [out=135,in=45, looseness=1]   node[above] {$\gamma$}  (D);
\end{tikzpicture}
\end{center}
Summing up, any complete lattice $\mathbb{L}$ can be associated with an heterogeneous algebra $(\mathbb{L}, \mathbb{D}, \mathbb{E}, e_\ell, \gamma, e_r, \iota)$ such that
\begin{itemize}
\item[H1.] $\mathbb{L} = (L, \leq)$ is a bounded poset;\footnote{We overload the symbol $\mathbb{L}$ and use it both to denote the complete lattice and its underlying poset.}
\item[H2.] $\mathbb{D}$ and $\mathbb{E}$ are complete and completely distributive lattices;
\item[H3.] $\gamma: \mathbb{D}\to \mathbb{L}$ and $e_\ell: \mathbb{L}\to \mathbb{D}$ are such that $\gamma\dashv e_\ell$ and $\gamma \circ e_\ell = Id_{\mathbb{L}}$;
\item[H4.] $\iota: \mathbb{E}\to \mathbb{L}$ and $e_r: \mathbb{L}\to \mathbb{E}$ are such that $ e_r \dashv\iota $ and $\iota \circ e_r = Id_{\mathbb{L}}$.
\end{itemize}
Conversely, for any such an heterogeneous algebra, the poset $\mathbb{L}$ can be endowed with the structure of a complete lattice inherited by being order-isomorphic both to the poset of closed sets of the closure operator $c: = \gamma\circ e_{\ell}$ on $\mathbb{D}$ and to the poset of open sets of the interior operator $i: = \iota\circ e_{r}$ on $\mathbb{E}$. Finally, no algebraic information is lost when presenting a complete lattice $\mathbb{L}$ as its associated heterogeneous algebra. Indeed,
the identification of $\mathbb{L}$ with $\mathsf{Range}(c)$, endowed with the structure of complete lattice defined as in \eqref{eq:def left lattice ops}, implies that for all $a, b\in \mathbb{L}$,
\[a\vee b =  \gamma(e_\ell(a)\cup e_\ell (b)). \]
As discussed above,  $e_\ell$ being a right adjoint and $\gamma$ a left adjoint imply that $e_\ell$ is completely meet-preserving and $\gamma$ completely join-preserving. Therefore, $e_\ell(\top) = \ptopp$ and $\bot  = \gamma(\varnothing)$. Moreover,  $\gamma$ being both surjective and order-preserving implies that $\top = \gamma(\ptopp)$. Furthermore,  for all $a, b\in \mathbb{L}$,
\[a\wedge b = \gamma\circ e_\ell(a\wedge b) = \gamma(e_\ell(a)\cap e_\ell (b)). \]
Thus, the whole algebraic structure of $\mathbb{L}$ can be captured in terms of the algebraic structure of $\mathbb{D}$ and the adjoint maps $\gamma$ and $e_\ell$ as follows: for all $a, b\in \mathbb{L}$,
\begin{equation}
\label{eq:soundness left-translation}
\bot  = \gamma(\varnothing)  \quad \top = \gamma(\ptopp) \quad a\vee b =  \gamma(e_\ell(a)\cup e_\ell (b)) \quad a\wedge b =  \gamma(e_\ell(a)\cap e_\ell (b)). \end{equation}
Reasoning analogously, one can also capture the algebraic structure of $\mathbb{L}$ in terms of the algebraic structure of $\mathbb{E}$ and the adjoint maps $\iota$ and $e_r$ as follows: for all $a, b\in \mathbb{L}$,
\begin{equation}
\label{eq:soundness right-translation}
\top = \iota(\Im) \quad \bot = \iota(\emptyset)  \quad a\wedge b =  \iota(e_r(a)\sqcap e_r (b)) \quad a\vee b = \iota(e_r(a)\sqcup e_r (b)).
\end{equation}

\section{Multi-type Hilbert-style presentation for lattice logic}\label{sec: multi-type language}
In Section \ref{sec:heterogeneous algebras for complete lattices},  heterogeneous algebras  have been introduced  and shown to be  equivalent presentations of  complete lattices. The toggle between these mathematical structures is reflected in the toggle between the logical languages which are naturally interpreted in the two types of structures.  Indeed, the heterogeneous algebras of Section \ref{sec:heterogeneous algebras for complete lattices} provide a natural interpretation for the following multi-type language $\mathcal{L}_{\mathrm{MT}}$ over a set $\mathsf{AtProp}$ of $\mathsf{Lattice}$-type atomic propositions:
\begin{align*}
\mathsf{Left}\ni \alpha ::=&\, e_{\ell}(A)    \mid \ptopp \mid \varnothing \mid \alpha \cup \alpha \mid  \alpha \cap \alpha \\
\mathsf{Right}\ni \xi ::=&\,  e_{r}(A)        \mid \Im \mid \emptyset \mid \xi \sqcup \xi \mid \xi \sqcap \xi  \\
\mathsf{Lattice}\ni  A ::= & \,p \mid \, \gamma(\alpha) \mid \iota(\xi) \mid \top \mid \bot
\end{align*}
where $p\in \mathsf{AtProp}$.
The interpretation of $\mathcal{L}_{\mathrm{MT}}$-terms  into heterogeneous algebras  is defined as the straightforward generalization of the interpretation of propositional languages in algebras of compatible signature.
At the end of the previous section, we observed  that the algebraic structure of the complete lattice $\mathbb{L}$ can be captured in terms of the algebraic structure of its associated heterogeneous algebra. This observation serves as a base for the definition of the translations $(\cdot)^\ell, (\cdot)^r: \mathcal{L}\to \mathcal{L}_{\mathrm{MT}}$
between the original language $\mathcal{L}$ of lattice logic  and  $\mathcal{L}_{\mathrm{MT}}$:
\begin{center}
\begin{tabular}{r c l c r c l}
$p^\ell$ &$ = $& $\gamma e_\ell(p)$ & $\quad\quad$ &$p^r$ &$ = $& $\iota e_r(p)\mneg$\\
$\top^\ell$ &$ = $& $\gamma e_\ell(\top)$ && $\top^r$ &$ = $& $\iota e_r(\top)$\\
$\bot^\ell$ &$ = $& $\gamma e_\ell(\bot)$ && $\bot^r$ &$ = $& $\iota e_r(\bot)$\\
$(A\andl B)^\ell$ &$ = $& $\gamma (e_\ell(A^\ell) \cap e_\ell(B^\ell))$ && $(A\andl B)^r$ &$ = $& $\iota (e_r(A^r) \sqcap e_r(B^r))$\\
$(A\orl B)^\ell$ &$ = $& $\gamma (e_\ell(A^\ell) \cup e_\ell(B^\ell))$ && $(A\orl B)^r$ &$ = $& $\iota (e_r(A^r) \sqcup e_r(B^r))$\\
\end{tabular}
\end{center}
For  every complete lattice $\mathbb{L}$, let $\mathbb{L}^*$ denote its associated heterogeneous algebra as defined in Section \ref{sec:heterogeneous algebras for complete lattices}. The proof of the following proposition relies on the observations made at the end of Section \ref{sec:heterogeneous algebras for complete lattices}. 
\begin{proposition}
\label{prop:consequence preserved and reflected}
For all $\mathcal{L}$-formulas $A$ and $B$ and every complete lattice $\mathbb{L}$,
\[\mathbb{L}\models A\leq B \quad \mbox{ iff }\quad \mathbb{L}^*\models A^\ell\leq B^r.\]
\end{proposition}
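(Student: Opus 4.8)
The plan is to deduce the proposition from the sharper fact that \emph{both} translations are interpretation-preserving. Since $\mathsf{AtProp}$ consists entirely of $\mathsf{Lattice}$-type atomic propositions, any assignment $v$ of $\mathsf{AtProp}$ into the carrier of $\mathbb{L}$ is simultaneously an assignment into the $\mathsf{Lattice}$-component of $\mathbb{L}^*$, and hence induces both an interpretation $\llbracket\cdot\rrbracket^{\mathbb{L},v}$ of $\mathcal{L}$-formulas in $\mathbb{L}$ and an interpretation $\llbracket\cdot\rrbracket^{\mathbb{L}^*,v}$ of $\mathcal{L}_{\mathrm{MT}}$-terms in $\mathbb{L}^*$ (defined homomorphically, as indicated at the start of Section~\ref{sec: multi-type language}). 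First I would prove, by simultaneous induction on the $\mathcal{L}$-formula $A$, that for every such $v$
\[\llbracket A^\ell\rrbracket^{\mathbb{L}^*,v} = \llbracket A\rrbracket^{\mathbb{L},v} = \llbracket A^r\rrbracket^{\mathbb{L}^*,v}.\]
Granting this, the proposition is immediate: both $A^\ell$ and $B^r$ are $\mathsf{Lattice}$-type terms, so the inequality $A^\ell\leq B^r$ is well formed, and $\mathbb{L}^*\models A^\ell\leq B^r$ means $\llbracket A^\ell\rrbracket^{\mathbb{L}^*,v}\leq\llbracket B^r\rrbracket^{\mathbb{L}^*,v}$ for all $v$, which by the displayed identities is equivalent to $\llbracket A\rrbracket^{\mathbb{L},v}\leq\llbracket B\rrbracket^{\mathbb{L},v}$ for all $v$, i.e.\ to $\mathbb{L}\models A\leq B$.

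For the induction, the base cases $A\in\{p,\top,\bot\}$ reduce to $\gamma e_\ell(p)=p$, $\gamma e_\ell(\top)=\top$, $\gamma e_\ell(\bot)=\bot$ and their $(\cdot)^r$-analogues, all of which hold because $\gamma\circ e_\ell = Id_{\mathbb{L}}$ and $\iota\circ e_r = Id_{\mathbb{L}}$ (properties H3 and H4). For the inductive step, take $A = B\andl C$: by homomorphicity of the interpretation, $\llbracket (B\andl C)^\ell\rrbracket^{\mathbb{L}^*,v} = \gamma\big(e_\ell(\llbracket B^\ell\rrbracket^{\mathbb{L}^*,v})\cap e_\ell(\llbracket C^\ell\rrbracket^{\mathbb{L}^*,v})\big)$; applying the induction hypothesis to the $\mathsf{Lattice}$-typed values of $B^\ell$ and $C^\ell$ this equals $\gamma\big(e_\ell(\llbracket B\rrbracket^{\mathbb{L},v})\cap e_\ell(\llbracket C\rrbracket^{\mathbb{L},v})\big)$, which by \eqref{eq:soundness left-translation} equals $\llbracket B\rrbracket^{\mathbb{L},v}\wedge\llbracket C\rrbracket^{\mathbb{L},v} = \llbracket B\andl C\rrbracket^{\mathbb{L},v}$. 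The case $A = B\orl C$ is identical, again via \eqref{eq:soundness left-translation}, and the $(\cdot)^r$-halves of both cases are handled in exactly the same way using \eqref{eq:soundness right-translation}. This exhausts the connectives of $\mathcal{L}$.

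I do not expect a genuine obstacle: the substance of the statement has already been packaged into the identities \eqref{eq:soundness left-translation} and \eqref{eq:soundness right-translation} at the end of Section~\ref{sec:heterogeneous algebras for complete lattices}, together with $\gamma\circ e_\ell = Id_{\mathbb{L}}$ and $\iota\circ e_r = Id_{\mathbb{L}}$. The only points that require care are bookkeeping ones --- loading the induction with the stronger equality-of-interpretations statement (for both translations at once) rather than the bare biconditional, and observing that the $\mathsf{Left}$- and $\mathsf{Right}$-typed subterms introduced by the translations (such as $e_\ell(A^\ell)$ or $e_r(A^r)$) are interpreted by applying $e_\ell$, resp.\ $e_r$, to the already-computed $\mathsf{Lattice}$-value, so that the induction hypothesis applies cleanly before any $\mathsf{Left}$/$\mathsf{Right}$ operations are performed.
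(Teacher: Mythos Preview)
Your proposal is correct and is precisely the unfolding of what the paper intends: the paper's proof consists of a single sentence pointing to ``the observations made at the end of Section~\ref{sec:heterogeneous algebras for complete lattices}'', i.e.\ to the identities \eqref{eq:soundness left-translation}, \eqref{eq:soundness right-translation} and to $\gamma\circ e_\ell = Id_{\mathbb{L}}$, $\iota\circ e_r = Id_{\mathbb{L}}$, which is exactly the content you spell out by induction.
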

\section{Proper display calculus for lattice logic}
\label{MultiTypeDisplayCalculus}
In the present section, we introduce the proper multi-type display calculus D.LL for lattice logic.
\subsection{Language}\label{ssec:language of DLL}
The language of D.LL  includes the types $\mathsf{Lattice}$, $\mathsf{Left}$, and $\mathsf{Right}$, sometimes abbreviated as $\mathsf{L}$, $\mathsf{P}$, and $\mathsf{P}^{\mathrm{op}}$ respectively.

\begin{center}
\begin{tabular}{l}
$\mathsf{^{\phantom{\mathrm{op}}}L} \left\{\begin{array}{l}
A ::= \,p \mid \pbdiabp \alpha \mid \bboxb \xi \\
 \\
X ::= \, p \mid \TOPBOTL \mid \PBCIRCBP \Gamma \mid \PBCIRCBP^{\mathrm{op}} \Pi \\
\end{array} \right.$
 \\

 \\

$\mathsf{^{\phantom{\mathrm{op}}}P} \left\{\begin{array}{l}
\alpha     ::= \, \pwbox A \\
 \\
\Gamma ::= \, \PWCIRC X \mid \PTOPBOT \mid \Gamma \PANDOR \Gamma \mid \Gamma\PARR \Gamma \\
\end{array} \right.$
 \\

 \\

$\mathsf{P^{\mathrm{op}}} \left\{\begin{array}{l}
\xi     ::= \, \diawp A \\
 \\
\Pi ::= \, \CIRCWP X \mid \TOPBOTP \mid \Pi \ANDORP \Pi \mid \Pi \ARRP \Pi \\
\end{array} \right.$

 \\
\end{tabular}
\end{center}

Our notational conventions assign different variables to different types, and hence allow us to drop the subscripts $^{\mathrm{op}}$, given that the parsing of expressions such as $\PBCIRCBP \Gamma $ and $\PBCIRCBP \Pi$ is unambiguous.
\begin{itemize}
\item Structural and operational pure $\mathsf{L}$-type connectives:\footnote{We follow the notational conventions introduced in \cite{LORI}: Each structural connective in the upper row of the synoptic tables is interpreted as  the logical connective in the left (resp.\ right) slot  in the lower row when occurring in precedent (resp.\ succedent) position.}
\end{itemize}

\begin{center}
\begin{tabular}{|c|c|}
\hline
\mc{2}{|c|}{\fns $\mathsf{L}$ connectives} \\
\hline
\mc{2}{|c|}{$\TOPBOTL$}                           \\
\hline
$\ \ \topl\ \ $ & $\botl$                                    \\
\hline
\end{tabular}
\end{center}

\begin{itemize}
\item Structural and operational pure $\mathsf{P}$-type and $\mathsf{P^\mathrm{op}}$-type connectives:
\end{itemize}

\begin{center}
\begin{tabular}{|c|c|c|c|c|c|}
\hline
\mc{6}{|c|}{\fns $\mathsf{P}$ connectives}                                                   \\
\hline
\mc{2}{|c|}{$\PTOPBOT$} & \mc{2}{c|}{$\PANDOR$} & \mc{2}{c|}{$\PARR$}   \\
\hline
$(\ptopp)$ & $(\pbot)$        & $\pand$ & $\por$         & $(\pdrarr)$ & $(\prarr)$ \\
\hline
\end{tabular}
\end{center}


\begin{center}
\begin{tabular}{|c|c|c|c|c|c|}
\hline
\mc{6}{|c|}{\fns $\mathsf{P^\mathrm{op}}$ connectives}                                                   \\
\hline
\mc{2}{|c|}{$\,\TOPBOTP$} & \mc{2}{c|}{$\ \ \ \,\ANDORP$} & \mc{2}{c|}{$\ \ \ \,\ARRP$}   \\
\hline
$(\topp)$ & $(\botp)$        & $\andp$ & $\orp$         & $(\drarrp)$ & $(\rarrp)$ \\
\hline
\end{tabular}
\end{center}


\begin{itemize}
\item Structural and operational multi-type connectives: 
\end{itemize}

\begin{center}
\begin{tabular}{|c|c|c|c|c|c|c|c|}
\hline
\mc{2}{|c|}{\fns $\mathsf{L} \to \mathsf{P}$} & \mc{2}{c|}{\fns $\mathsf{L} \to \mathsf{P^\mathrm{op}}$} & \mc{2}{c|}{\fns $\mathsf{P} \to \mathsf{L}$} & \mc{2}{c|}{\fns $\mathsf{P^\mathrm{op}} \to \mathsf{L}$} \\
\hline
\mc{2}{|c|}{$\PWCIRC$}               & \mc{2}{c|}{$\CIRCWP$}               & \mc{2}{c|}{$\PBCIRC$} & \mc{2}{c|}{$\PBCIRC$}           \\
\hline
$\phantom{\pwbox}$ & $\pwbox$ & $\diawp$ & $\phantom{\diawp}$ & $\pbdia$ & $\phantom{\pbdia}$ & $\phantom{\boxbp}$ & $\boxbp$ \\
\hline
\end{tabular}
\end{center}

The connectives $\pwbox$,  $\diawp$, $\pbdia$ and $\boxbp$ are interpreted  in heterogeneous algebras  as the maps $e_\ell$, $e_r$, $\gamma$, and  $\iota$,  respectively.



\subsection{Rules}
In what follows, structures of type $\mathsf{L}$ are denoted by the variables $X, Y, Z$, and $W$; structures of type $\mathsf{P}$ are denoted by the variables $\Gamma, \Delta, \Theta$, and $\Lambda$; structures of type $\mathsf{P}^{\mathrm{op}}$ are denoted by the variables $\Pi, \Sigma, \Psi$, and $\Omega$.
Given the semantic environment introduced in Section \ref{sec:heterogeneous algebras for complete lattices}, it will come as no surprise that there is a perfect match between the  pure $\mathsf{P}$-type rules  and the pure $\mathsf{P^\mathrm{op}}$-type rules. In order to achieve a more compact presentation of the calculus, in what follows we will also reserve the variables $S, T, U$, and $V$ to denote either $\mathsf{P}$-type structures or  $\mathsf{P}^{\mathrm{op}}$-type structures, and $s, t, u$ and $ v$ to denote operational terms of either $\mathsf{P}$-type  or  $\mathsf{P}^{\mathrm{op}}$-type, with the proviso that they should be interpreted in the {\em same} type in the same  pure type-rule.
\begin{itemize}
\item Multi-type display rules
\end{itemize}

\begin{center}
\begin{tabular}{ccc}
\AX$\Gamma \fCenter \PWCIRCWP  X $
\RightLabel{\fns $D_{P\textrm{-}L}$}
\doubleLine
\UI$ \PBCIRCBP \Gamma \fCenter X $
\DisplayProof
 &
\AX$\PWCIRCWP  X  \fCenter \Pi$
\RightLabel{\fns $D_{P\textrm{-}L}$}
\doubleLine
\UI$ X \fCenter \PBCIRCBP \Pi$
\DisplayProof
 \\
\end{tabular}
\end{center}

\begin{itemize}
\item Pure $\mathsf{P}$-type and  $\mathsf{P}^{\mathrm{op}}$-type display rules
\end{itemize}

\begin{center}
\begin{tabular}{cc}
\AX$S \PANDORP T \fCenter U$
\LeftLabel{\fns $D_P$}
\doubleLine
\UI$T \fCenter S \PARRP U$
\DisplayProof
 &
\AX$S \fCenter T \PANDORP U$
\RightLabel{\fns $D_P$}
\doubleLine
\UI$T \PARRP S \fCenter U$
\DisplayProof
 \\
\end{tabular}
\end{center}

\begin{itemize}
\item Pure $\mathsf{P}$-type and  $\mathsf{P}^{\mathrm{op}}$-type rules
\end{itemize}

\begin{center}
\begin{tabular}{rl}

\mc{2}{c}{structural rules} \\

 & \\

\mc{2}{c}{
\AX$S \fCenter s$
\AX$s \fCenter T$
\RightLabel{\fns $Cut$}
\BI$S \fCenter T$
\DisplayProof
 }
 \\

 & \\

\AX$S \fCenter T$
\doubleLine
\LeftLabel{\fns $\PTOPBOTP$}
\UI$S \PANDORP \PTOPBOTP \fCenter T$
\DisplayProof
 &
\AX$S \fCenter T$
\doubleLine
\RightLabel{\fns $\PTOPBOTP$}
\UI$S \fCenter T \PANDORP \PTOPBOTP$
\DisplayProof
 \\

 & \\

\AX$S \PANDORP T \fCenter U$
\LeftLabel{\fns $E$}
\UI$T \PANDORP S \fCenter U$
\DisplayProof
 &
\AX$S \fCenter T \PANDORP U$
\RightLabel{\fns $E$}
\UI$S \fCenter U \PANDORP T$
\DisplayProof
 \\

 & \\

\AX$(S \PANDORP T) \PANDORP U \fCenter V$
\doubleLine
\LeftLabel{\fns $A$}
\UI$S \PANDORP (T \PANDORP U) \fCenter V$
\DisplayProof
 &
\AX$S \fCenter (T \PANDORP U) \PANDORP V$
\RightLabel{\fns $A$}
\doubleLine
\UI$S \fCenter T \PANDORP (U \PANDORP V)$
\DisplayProof
 \\

 & \\

\AX$S \fCenter T$
\LeftLabel{\fns $W$}
\UI$S \PANDORP U \fCenter T$
\DisplayProof
 &
\AX$S \fCenter T$
\RightLabel{\fns $W$}
\UI$S \fCenter T \PANDORP U$
\DisplayProof
 \\

 & \\

\AX$S \PANDORP S \fCenter T$
\LeftLabel{\fns $C$}
\UI$S \fCenter T$
\DisplayProof
 & 
\AX$S \fCenter T \PANDORP T$
\RightLabel{\fns $C$}
\UI$S \fCenter T$
\DisplayProof
 \\


 & \\

\mc{2}{c}{operational rules} \\

 & \\





\AX$s \PANDORP t \fCenter S$
\LeftLabel{\fns $\pandp$}
\UI$s \pandp t \fCenter S$
\DisplayProof
 &
\AX$S \fCenter s$
\AX$T \fCenter t$
\RightLabel{\fns $\pandp$}
\BI$S \PANDORP T \fCenter s \pandp t$
\DisplayProof
 \\

 & \\

\AX$s \fCenter S$
\AX$t \fCenter T$
\LeftLabel{\fns $\porp$}
\BI$s \porp t \fCenter S \PANDORP T$
\DisplayProof
 & 
\AX$S \fCenter s \PANDORP t$
\RightLabel{\fns $\porp$}
\UI$S \fCenter s \porp t$
\DisplayProof
 \\

\end{tabular}
\end{center}


\begin{itemize}
\item Pure $\mathsf{L}$-type rules
\end{itemize}

\begin{center}
\begin{tabular}{rlcrl}
\mc{2}{c}{structural rules} & \ \ \ \ \ & \mc{2}{c}{operational rules} \\
 & & & & \\
\AXC{\phantom{$p \fCenter p$}}
\LeftLabel{\fns $Id$}
\UI$p \fCenter p$
\DisplayProof
 &
\AX$X \fCenter A$
\AX$A \fCenter Y$
\RightLabel{\fns $Cut$}
\BI$X \fCenter Y$
\DisplayProof
 & &
\AX$\ATOPBOT \fCenter X$
\LeftLabel{\fns $\topl$}
\UI$\topl \fCenter X$
\DisplayProof
 &
\AXC{\phantom{$ \fCenter \top$}}
\RightLabel{\fns $\topl$}
\UI$\ATOPBOT \fCenter \topl$
\DisplayProof
 \\
 & & & & \\
 &
\AX$\ATOPBOT \fCenter X$
\LeftLabel{\fns $\TOPBOTL$-$W$}
\UI$Y \fCenter X$
\DisplayProof
 & &
\AXC{\phantom{$ \fCenter \top$}}
\LeftLabel{\fns $\botl$}
\UI$\botl \fCenter \ATOPBOT$
\DisplayProof
 &
\AX$X \fCenter \ATOPBOT$
\RightLabel{\fns $\botl$}
\UI$X \fCenter \botl$
\DisplayProof
 \\
\end{tabular}
\end{center}

\begin{itemize}
\item Operational rules for multi-type connectives:
\end{itemize}
\begin{center}
\begin{tabular}{rlrl}


\mc{2}{c}{$\mathsf{L} \to \mathsf{P}^{\mathrm{op}}$} & \mc{2}{c}{$\mathsf{P}^{\mathrm{op}} \to \mathsf{L}$}\\

 & \\
\AX$\WCIRCW A \fCenter \Pi$
\LeftLabel{\fns $\wdiaw$}
\UI$\wdiaw A \fCenter \Pi$
\DisplayProof
 &
\AX$X \fCenter A$
\RightLabel{\fns $\wdiaw$}
\UI$\WCIRCW X \fCenter \wdiaw A$
\DisplayProof
 &
\AX$X \fCenter \BCIRCB \xi$
\LeftLabel{\fns $\bboxb$}
\UI$X \fCenter \bboxb \xi$
\DisplayProof
 &
\AX$\xi \fCenter \Pi$
\RightLabel{\fns $\bboxb$}
\UI$\bboxb \xi \fCenter \BCIRCB \Pi$
\DisplayProof
 \\

 & & & \\

\mc{2}{c}{$\mathsf{P} \to \mathsf{L}$} & \mc{2}{c}{$\mathsf{L} \to \mathsf{P}$} \\

 & \\
\AX$\BCIRCB \alpha \fCenter X$
\LeftLabel{\fns $\bdiab$}
\UI$\bdiab \alpha \fCenter X$
\DisplayProof
 &
\AX$\Gamma \fCenter \alpha$
\RightLabel{\fns $\bdiab$}
\UI$\BCIRCB \Gamma \fCenter \bdiab \alpha$
\DisplayProof
 &
\AX$\Gamma \fCenter \WCIRCW A$
\LeftLabel{\fns $\wboxw$}
\UI$\Gamma \fCenter \wboxw A$
\DisplayProof
 &
\AX$A \fCenter X$
\RightLabel{\fns $\wboxw$}
\UI$\wboxw A \fCenter \WCIRCW X$
\DisplayProof
 \\

\end{tabular}
\end{center}

\section{Properties}\label{sec:properties}
\subsection{Soundness}\label{ssec:soundness}
In the present subsection, we outline the  verification of the soundness of the rules of D.LL w.r.t.~the semantics of  heterogeneous algebras  introduced in Section \ref{sec:heterogeneous algebras for complete lattices}. The first step consists in interpreting structural symbols as logical symbols according to their (precedent or succedent) position,\footnote{\label{footnote:def precedent succedent pos}For any  sequent $x\vdash y$, we define the signed generation trees $+x$ and $-y$ by labelling the root of the generation tree of $x$ (resp.\ $y$) with the sign $+$ (resp.\ $-$), and then propagating the sign to all nodes according to the polarity of the coordinate of the connective assigned to each node. Positive (resp.\ negative) coordinates propagate the same (resp.\ opposite) sign to the corresponding child node.  Then, a substructure $z$ in $x\vdash y$ is in {\em precedent} (resp.\ {\em succedent}) {\em position} if the sign of its root node as a subtree of $+x$ or $-y$ is  $+$ (resp.\ $-$).}
as indicated in the synoptic tables of Section \ref{ssec:language of DLL}. This makes it possible to interpret sequents as inequalities, and rules as quasi-inequalities.
The verification of the soundness of the rules of D.LL  then consists in verifying the validity of their corresponding quasi-inequalities in heterogeneous algebras. The verification of the soundness of pure-type rules and of the introduction rules following this procedure is routine, and is omitted. The only multi-type rules of D.LL are the display rules, the validity of which follows straightforwardly from the adjunctions between  the interpretations of the multi-type connectives involved.

\subsection{Conservativity}
\label{ssec:conservativity}
To argue that the calculus D.LL introduced in Section \ref{MultiTypeDisplayCalculus} adequately captures lattice logic, we follow the standard proof strategy discussed in \cite{GMPTZ,LORI}. Let 
$\models_{\mathrm{HA}}$ denote the semantic consequence relation arising from the heterogeneous algebras introduced in Section \ref{sec:heterogeneous algebras for complete lattices}. We need to show that, for all formulas $A$ and $B$ of the original language of lattice logic, if $A^\tau\vdash B_\tau$ is a D.LL-derivable sequent, then  $A\vdash B$ is a theorem of the Hilbert-style presentation of lattice logic. This claim can be proved using  the following  facts: (a) the rules of D.LL are sound w.r.t.~heterogeneous algebras  (cf.~Section \ref{ssec:soundness}),  (b)  lattice logic is strongly complete w.r.t.~the class of complete lattices, and (c) complete lattices are equivalently presented as   heterogeneous algebras (cf.~Section \ref{sec:heterogeneous algebras for complete lattices}), so that the semantic consequence relation arising from each type of structures preserves and reflects the translation (cf.~Proposition \ref{prop:consequence preserved and reflected}). Then, let $A, B$ be formulas of  the original lattice logic language. If  $A^\tau\vdash B_\tau$ is a D.LL-derivable sequent, then, by (a),  $A^\tau\models_{\mathrm{HA}} B_\tau$. By (c), this implies that $A\models_{\mathrm{LL}} B$, where $\models_{\mathrm{LL}}$ denotes the semantic consequence relation arising from (complete) lattices. By (b), this implies that $A\vdash B$ is a theorem of the Hilbert-style presentation of lattice logic, as required.

\subsection{Cut elimination and subformula property}
\label{ssec:CutEliminationAndSubformulaProperty}

In the present section, we outline the proof of cut elimination and subformula property for the calculus D.LL introduced in Section \ref{MultiTypeDisplayCalculus}. As discussed earlier on, the cut elimination and subformula property  do not need to be proved via the original argument by Gentzen, but can rather be inferred from a meta-theorem, following the strategy introduced by Belnap for display calculi. The meta-theorem to which we will appeal for D.LL was proved in \cite{Trends}, and  in \cite[Theorem A.2]{GP:linear} a restricted version of it is stated, which  specifically applies to {\em proper multi-type display calculi} (cf.~\cite[Definition A.1]{GP:linear}).

By \cite[Theorem A.2]{GP:linear}, it is enough to verify that D.LL is a  proper multi-type display calculus, i.e.~it meets the conditions C$_1$-C$_8$ listed in \cite[Definition A.1]{GP:linear}. All conditions except C$_8$ are readily satisfied by inspecting the rules. In what follows we verify C$_8$. This requires to check that reduction steps are available for every application of the cut rule in which both cut-formulas are principal, which either remove the original cut altogether or replace it by one or more cuts on formulas of strictly lower complexity.

\paragraph*{Atomic propositions:}

\begin{center}
\begin{tabular}{ccc}
\bottomAlignProof
\AX$p \fCenter p$
\AX$p \fCenter p$
\BI$p \fCenter p$
\DisplayProof

 & $\rightsquigarrow$ &

\bottomAlignProof
\AX$p \fCenter p$
\DisplayProof
 \\
\end{tabular}
\end{center}

\paragraph*{Constants:}

\begin{center}
\begin{tabular}{ccc}
\bottomAlignProof
\AX$\ATOPBOT \fCenter \topl$
\AXC{\ \ \ $\vdots$ \raisebox{1mm}{$\pi_1$}}
\noLine
\UI$\ATOPBOT \fCenter X$
\UI$\topl \fCenter X$
\BI$\ATOPBOT \fCenter X$
\DisplayProof

 & $\rightsquigarrow$ &

\bottomAlignProof
\AXC{\ \ \ $\vdots$ \raisebox{1mm}{$\pi_1$}}
\noLine
\UI$\ATOPBOT \fCenter X$
\DisplayProof
 \\
\end{tabular}
\end{center}

\noindent The case for $\botl$ is similar to the one above.

\paragraph*{Binary connectives:}

\begin{center}
\begin{tabular}{ccc}
\!\!\!\!\!
\bottomAlignProof
\AXC{\ \ \ $\vdots$ \raisebox{1mm}{$\pi_1$}}
\noLine
\UI$S \fCenter s$
\AXC{\ \ \ $\vdots$ \raisebox{1mm}{$\pi_2$}}
\noLine
\UI$T \fCenter t$
\BI$S\PANDORP T \fCenter s \pandp t$
\AXC{\ \ \ $\vdots$ \raisebox{1mm}{$\pi_3$}}
\noLine
\UI$s \PANDORP t \fCenter U$
\UI$s \pandp t \fCenter U$
\BI$S \PANDORP T \fCenter U$
\DisplayProof

 & $\rightsquigarrow$ &

\!\!\!\!\!\!\!\!\!\!\!\!\!\!\!\!\!\!\!\!
\bottomAlignProof
\AXC{\ \ \ $\vdots$ \raisebox{1mm}{$\pi_1$}}
\noLine
\UI$S \fCenter s$
\AXC{\ \ \ $\vdots$ \raisebox{1mm}{$\pi_2$}}
\noLine
\UI$T \fCenter t$
\AXC{\ \ \ $\vdots$ \raisebox{1mm}{$\pi_3$}}
\noLine
\UI$s \PANDORP t \fCenter U$

\UI$t \fCenter s \PARRP U$
\BI$T \fCenter s \PARRP U$
\UI$s \PANDORP T \fCenter U$
\UI$T \PANDORP s \fCenter U$
\UI$s \fCenter T \PARRP U$
\BI$S \fCenter T \PARRP U$
\UI$ T \PANDORP S \fCenter U$
\UI$S \PANDORP T \fCenter U$
\DisplayProof
 \\
\end{tabular}
\end{center}

\noindent The case for $s \porp t$ is similar to the one above.

\paragraph*{Multi-type connectives:}

\begin{center}
\begin{tabular}{ccc}
\bottomAlignProof
\AXC{\ \ \ $\vdots$ \raisebox{1mm}{$\pi_1$}}
\noLine
\UI$X \fCenter A$
\UI$\WCIRCW X \fCenter \wdiaw A$
\AXC{\ \ \ $\vdots$ \raisebox{1mm}{$\pi_2$}}
\noLine
\UI$\WCIRCW A \fCenter \Pi$
\UI$\wdiaw A \fCenter \Pi$
\BI$\WCIRCW X \fCenter \Pi$
\DisplayProof

 & $\rightsquigarrow$ &

\!\!\!\!\!\!\!
\bottomAlignProof
\AXC{\ \ \ $\vdots$ \raisebox{1mm}{$\pi_1$}}
\noLine
\UI$X \fCenter A$
\AXC{\ \ \ $\vdots$ \raisebox{1mm}{$\pi_2$}}
\noLine
\UI$\WCIRCW A \fCenter \Pi$
\UI$A \fCenter \BCIRCB \Pi$
\BI$X \fCenter \BCIRCB \Pi$
\UI$\WCIRCW X \fCenter \Pi$
\DisplayProof
 \\
\end{tabular}
\end{center}

\noindent The cases for $\wboxw A$, $\bdiab \alpha$, and $\bboxb \xi$ are   similar to the one above.

\subsection{Completeness}

In order to translate sequents of the original language of lattice logic into sequents in the multi-type language of lattice logic, we will make use of the translations $\tau_1, \tau_2: \mathcal{L}\to \mathcal{L}_{\mathrm{MT}}$ so that for all $A, B\in \mathcal{L}$ and $A\vdash B$, we write
\begin{center}
$\tau_1(A) \vdash \tau_2(B)$\ \ \ \ \ abbreviated as\ \ \ \ \ $A^\tau \vdash B_\tau$.
\end{center}
The translations $\tau_1$ and $\tau_2$ are defined by simultaneous induction as follows:

\begin{center}
\begin{tabular}{rl|rl}

$\topl^\tau ::=$ & $\pbdia \pwbox \, \topl$ & $\topl_\tau ::=$ & $\boxbp \diawp \, \topl$ \\
$\botl^\tau ::=$ & $\pbdia \pwbox \, \botl$ & $\botl_\tau ::=$ & $\boxbp \diawp \, \botl$ \\
$p^\tau      ::=$ & $\pbdia \pwbox \, p$ & $p_\tau ::=$ & $\boxbp \diawp \, p$       \\
$(A \andl B)^\tau ::=$ & $\pbdia (\pwbox \, A^\tau \pand \pwbox \, B^\tau)$             & $(A \andl B)_\tau ::=$ & $ \boxbp (\diawp \, A_\tau \andp \diawp \, B_\tau)$ \\
$(A \orl B)^\tau    ::=$ & $\pbdia (\pwbox \, A^\tau \por \pwbox \, B^\tau)$ & $(A \orl B)_\tau ::=$ & $\boxbp (\diawp \, A_\tau \orp \diawp \, B_\tau)$ \\
\end{tabular}
\end{center}

\begin{proposition}
\label{prop: identity theorem via translation}
For every $A\in \mathcal{L}$, the multi-type sequent $A^\tau \vdash A_\tau$ is derivable in D.LL.
\end{proposition}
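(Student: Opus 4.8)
The plan is to argue by induction on the structure of $A\in\mathcal{L}$, exhibiting in each case an explicit (and in fact cut-free) D.LL-derivation of $A^\tau\vdash A_\tau$. The guiding observation is that, under the intended interpretation, $A^\tau$ and $A_\tau$ denote exactly the same element of $\mathbb{L}$ as $A$ does (by the identities collected in \eqref{eq:soundness left-translation}--\eqref{eq:soundness right-translation}, ultimately by $\gamma\circ e_\ell = Id_{\mathbb{L}}$ and $\iota\circ e_r = Id_{\mathbb{L}}$); so the task is to realise this equality \emph{syntactically}, and the display postulates together with the adjunction rules for the maps $\gamma, e_\ell, e_r, \iota$ are precisely what makes that possible.

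\emph{Base cases.} For $A = p$ the sequent $p\vdash p$ is the axiom $Id$, and for $A\in\{\topl,\botl\}$ the sequent $A\vdash A$ follows in two steps from the operational rules for $\topl$, resp.\ $\botl$. Since here $A^\tau = \pbdia\pwbox A$ and $A_\tau = \boxbp\diawp A$, I would then turn $A\vdash A$ into $A^\tau\vdash A_\tau$ by a fixed block of six rule applications: introduce $\pwbox$ in precedent position; apply the display postulate $D_{P\textrm{-}L}$; introduce $\pbdia$ in precedent position (reaching $A^\tau\vdash A$); introduce $\diawp$ in succedent position; apply $D_{P\textrm{-}L}$ again; and introduce $\boxbp$ in succedent position. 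This is nothing but the syntactic unfolding of the identities $\gamma\circ e_\ell = Id_{\mathbb{L}}$ and $\iota\circ e_r = Id_{\mathbb{L}}$.

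\emph{Inductive step.} Suppose $B^\tau\vdash B_\tau$ and $C^\tau\vdash C_\tau$ are derivable and let $A = B\andl C$, so that $A^\tau = \pbdia(\pwbox B^\tau \pand \pwbox C^\tau)$ and $A_\tau = \boxbp(\diawp B_\tau \andp \diawp C_\tau)$. The single delicate point is that the introduction rules for the lattice meet --- in both of the auxiliary types --- are in \emph{multiplicative}, not additive, form, so the two ``wings'' of the meet cannot simply share a context. I would therefore proceed in two moves. (i) Derive the two projections $A^\tau\vdash B_\tau$ and $A^\tau\vdash C_\tau$: starting from $B^\tau\vdash B_\tau$, introduce $\pwbox$ on the left, then use weakening in type $\mathsf{Left}$ to pad the antecedent with $\pwbox C^\tau$, fold the structural conjunction by the $\pand$-rule, and display across with $D_{P\textrm{-}L}$ followed by the $\pbdia$-rule on the left; the derivation of $A^\tau\vdash C_\tau$ is symmetric, with one use of exchange in type $\mathsf{Left}$ to place the padding on the correct side. (ii) Recombine: apply the $\diawp$-rule on the right to each projection, obtaining two $\mathsf{Right}$-type sequents with a common antecedent; pair them with the multiplicative meet-rule read in type $\mathsf{Right}$, which leaves that antecedent duplicated; absorb the duplication by contraction in type $\mathsf{Right}$; and conclude with $D_{P\textrm{-}L}$ and the $\boxbp$-rule on the right, landing on $A^\tau\vdash A_\tau$. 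The case $A = B\orl C$ is the order-dual: padding is now done by weakening in the succedent of type $\mathsf{Right}$ and folding by the multiplicative join-rule there, the two wings are recombined by the multiplicative join-rule on the left, and the tidying-up contraction takes place in type $\mathsf{Left}$.

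\emph{Where the difficulty lies.} The whole content is in the inductive step, and the only genuine obstacle is the tension between the multiplicative introduction rules and the ``additive'' behaviour of meet/join projections, which forces weakening and contraction into every case. What makes this harmless --- and is really the point of the multi-type presentation --- is that all of these structural manipulations can be confined to the auxiliary types $\mathsf{Left}$ and $\mathsf{Right}$, each of which carries full weakening, contraction and exchange; no structural rule is ever invoked in type $\mathsf{Lattice}$, which has none beyond $\TOPBOTL$-weakening (and in particular no contraction), so distributivity is never in danger of sneaking in. The residual bookkeeping --- tracking which copy of the structural connectives $\PWCIRCWP$ and $\PBCIRCBP$, and of $\pand$ and $\por$, lives in which type --- is routine and is smoothed by the exact match between the pure $\mathsf{Left}$- and pure $\mathsf{Right}$-rules noted in Section~\ref{ssec:language of DLL}. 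Alternatively one could first establish the full multi-type identity theorem ($X\vdash X$, $\alpha\vdash\alpha$, $\xi\vdash\xi$ for all structures, by a simultaneous induction) and read off the present statement as an immediate corollary; the direct route sketched above avoids that detour.
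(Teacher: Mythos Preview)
Your proposal is correct and follows essentially the same route as the paper: induction on $A$, with the base cases handled by the six-step block you describe, and the inductive step for $\andl$ (and dually $\orl$) handled by first deriving the two projections via weakening in the auxiliary type, then recombining via the multiplicative introduction rule followed by contraction in the other auxiliary type. The paper phrases it as a ``simultaneous induction on $A\in\mathsf{L}$, $\alpha\in\mathsf{P}$, $\xi\in\mathsf{P^{op}}$'', but the actual derivations displayed are exactly the ones you outline, and your emphasis on confining all structural manipulation to the auxiliary types is precisely the point.
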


\begin{proof}
By simultaneous induction on $A\in \mathsf{L}$, $\alpha\in \mathsf{P}$, and $\xi\in \mathsf{P^{op}}$. 

\begin{itemize}
\item Base cases: $A := \topl$, $A := \botl$ and $A := p$

\begin{center}
\begin{tabular}{ccc}
\AXC{\phantom{$ \fCenter \top$}}
\RightLabel{\fns $\topl$}
\UI$\ATOPBOT \fCenter \topl$
\UI$\topl \fCenter \topl$
\UI$\pwboxwp \topl \fCenter \PWCIRCWP \topl$
\UI$\PBCIRCBP \pwboxwp \topl \fCenter \topl$
\UI$\pbdiabp \pwboxwp \topl \fCenter \topl$
\UI$\PWCIRCWP \pbdiabp \pwboxwp \topl \fCenter \pwdiawp \topl$
\UI$\pbdiabp \pwboxwp \topl \fCenter \PBCIRCBP \pwdiawp \topl$
\UI$\pbdiabp \pwboxwp \topl \fCenter \pbboxbp \pwdiawp \topl$
\DisplayProof
 &
\AXC{\phantom{$\bot \fCenter $}}
\LeftLabel{\fns $\botl$}
\UI$\botl \fCenter \ATOPBOT$
\UI$\botl \fCenter \botl$
\UI$\pwboxwp \botl \fCenter \PWCIRCWP \botl$
\UI$\PBCIRCBP \pwboxwp \botl \fCenter \botl$
\UI$\pbdiabp \pwboxwp \botl \fCenter \botl$
\UI$\PWCIRCWP \pbdiabp \pwboxwp \botl \fCenter \pwdiawp \botl$
\UI$\pbdiabp \pwboxwp \botl \fCenter \PBCIRCBP \pwdiawp \botl$
\UI$\pbdiabp \pwboxwp \botl \fCenter \pbboxbp \pwdiawp \botl$
\DisplayProof
 &
\AXC{\phantom{$p \fCenter p$}}
\LeftLabel{\fns $Id$}
\UI$p \fCenter p$
\UI$\pwboxwp p \fCenter \PWCIRCWP p$
\UI$\PBCIRCBP \pwboxwp p \fCenter p$
\UI$\pbdiabp \pwboxwp p \fCenter p$
\UI$\PWCIRCWP \pbdiabp \pwboxwp p \fCenter \pwdiawp p$
\UI$\pbdiabp \pwboxwp p \fCenter \PBCIRCBP \pwdiawp p$
\UI$\pbdiabp \pwboxwp p \fCenter \pbboxbp \pwdiawp p$
\DisplayProof
\end{tabular}
\end{center}

\item Inductive case: $A = B \andl C$

\begin{center}
\begin{tabular}{c}
\AXC{\phantom{$ \fCenter $}}
\LeftLabel{\fns ind.~hyp.}
\UI$B^\tau \fCenter B_\tau$
\UI$\pwboxwp B^\tau \fCenter \PWCIRCWP B_\tau$
\LeftLabel{\fns $W$}
\UI$\pwboxwp B^\tau \PANDORP \pwboxwp C^\tau \fCenter \PWCIRCWP B_\tau$
\UI$\pwboxwp B^\tau \pandp \pwboxwp C^\tau \fCenter \PWCIRCWP B_\tau$
\UI$\PBCIRCBP \pwboxwp B^\tau \pandp \pwboxwp C^\tau \fCenter B_\tau$
\UI$\pbdiabp (\pwboxwp B^\tau \pandp \pwboxwp C^\tau) \fCenter B_\tau$
\UI$\PWCIRCWP \pbdiabp (\pwboxwp B^\tau \pandp \pwboxwp C^\tau) \fCenter \pwdiawp B_\tau$
\AXC{\phantom{$ \fCenter $}}
\LeftLabel{\fns ind.~hyp.}
\UI$C^\tau \fCenter C_\tau$
\UI$\pwboxwp C^\tau \fCenter \PWCIRCWP C_\tau$
\LeftLabel{\fns $W$}
\UI$\pwboxwp C^\tau \PANDORP \pwboxwp B^\tau \fCenter \PWCIRCWP C_\tau$
\LeftLabel{\fns $E$}
\UI$\pwboxwp B^\tau \PANDORP \pwboxwp C^\tau \fCenter \PWCIRCWP C_\tau$
\UI$\pwboxwp B^\tau \pandp \pwboxwp C^\tau \fCenter \PWCIRCWP C_\tau$
\UI$\PBCIRCBP \pwboxwp B^\tau \pandp \pwboxwp C^\tau \fCenter C_\tau$
\UI$\pbdiabp (\pwboxwp B^\tau \pandp \pwboxwp C^\tau) \fCenter C_\tau$
\UI$\PWCIRCWP \pbdiabp (\pwboxwp B^\tau \pandp \pwboxwp C^\tau) \fCenter \pwdiawp C_\tau$
\BI$\PWCIRCWP \pbdiabp (\pwboxwp B^\tau \pandp \pwboxwp C^\tau) \PANDORP \PWCIRCWP \pbdiabp (\pwboxwp B^\tau \pandp \pwboxwp C^\tau) \fCenter \pwdiawp B_\tau \pandp \pwdiawp C_\tau$
\LeftLabel{\fns $C$}
\UI$\PWCIRCWP \pbdiabp (\pwboxwp B^\tau \pandp \pwboxwp C^\tau) \fCenter \pwdiawp B_\tau \pandp \pwdiawp C_\tau$
\UI$\pbdiabp (\pwboxwp B^\tau \pandp \pwboxwp C^\tau) \fCenter \PBCIRCBP \pwdiawp B_\tau \pandp \pwdiawp C_\tau$
\UI$\pbdiabp (\pwboxwp B^\tau \pandp \pwboxwp C^\tau) \fCenter \pbboxbp (\pwdiawp B_\tau \pandp \pwdiawp C_\tau)$
\DisplayProof
 \\
\end{tabular}
\end{center}
The case in which  $A = B \orl C$ is derived symmetrically.

\end{itemize}
\end{proof}

In what follows, we only derive the translations of the axioms involving conjunction, since the axioms involving disjunction can be treated symmetrically.


\begin{flushleft}

\end{center}

There are no  structural rules in which  $\PBCIRCBP$ and $\PANDOR$ interact, therefore we are reduced to the following possibilities:  either we isolate the structure $$X=\pwboxwp A \PANDORP \pwboxwp \pbdiabp (\pwboxwp B \porp \pwboxwp C)$$ in precedent position by means of a backward application of a display postulate, or we similarly isolate the structure $$Y=\pwdiawp \pbboxbp (\pwdiawp A \pandp \pwdiawp B) \PANDORP \pwdiawp \pbboxbp (\pwdiawp A \pandp \pwdiawp C)$$ in succedent position.

In what follows, we  treat the first case, since the argument for the second case is analogous. Once  the structure $X$ is in isolation, we can act on $X$ only via Exchange, Weakening or Residuation. However, each of these moves  will lead us to a dead end, as we show below.

\begin{itemize}
\item Case 1: (Exchange or) Residuation.
\end{itemize}

As an intermediate step, we can try to isolate any of the substructures of $X$ via Residuation, as follows:

\begin{center}
\begin{tabular}{c}
\AXC{$\!\!\wn\wn\wn$}
\noLine
\UIC{$\!\!\vdots$}
\noLine

\UI$\pwboxwp \pbdiabp (\pwboxwp B \porp \pwboxwp C) \fCenter \pwboxwp A \PARRP \PWCIRCWP \PBCIRCBP \Big( \pwdiawp \pbboxbp (\pwdiawp A \pandp \pwdiawp B) \PANDORP \pwdiawp \pbboxbp (\pwdiawp A \pandp \pwdiawp C) \Big)$

\UI$\pwboxwp A \PANDORP \pwboxwp \pbdiabp (\pwboxwp B \porp \pwboxwp C) \fCenter \PWCIRCWP \PBCIRCBP \Big( \pwdiawp \pbboxbp (\pwdiawp A \pandp \pwdiawp B) \PANDORP \pwdiawp \pbboxbp (\pwdiawp A \pandp \pwdiawp C) \Big)$

\DisplayProof
 \\
\end{tabular}
\end{center}

or via Exchange and Residuation, as follows:

\begin{center}
\begin{tabular}{c}
\AXC{$\!\!\wn\wn\wn$}
\noLine
\UIC{$\!\!\vdots$}
\noLine

\UI$\pwboxwp A \fCenter \pwboxwp \pbdiabp (\pwboxwp B \porp \pwboxwp C) \PARRP \PWCIRCWP \PBCIRCBP \Big( \pwdiawp \pbboxbp (\pwdiawp A \pandp \pwdiawp B) \PANDORP \pwdiawp \pbboxbp (\pwdiawp A \pandp \pwdiawp C) \Big)$

\UI$\pwboxwp \pbdiabp (\pwboxwp B \porp \pwboxwp C) \PANDORP \pwboxwp A \fCenter \PWCIRCWP \PBCIRCBP \Big( \pwdiawp \pbboxbp (\pwdiawp A \pandp \pwdiawp B) \PANDORP \pwdiawp \pbboxbp (\pwdiawp A \pandp \pwdiawp C) \Big)$

\UI$\pwboxwp A \PANDORP \pwboxwp \pbdiabp (\pwboxwp B \porp \pwboxwp C) \fCenter \PWCIRCWP \PBCIRCBP \Big( \pwdiawp \pbboxbp (\pwdiawp A \pandp \pwdiawp B) \PANDORP \pwdiawp \pbboxbp (\pwdiawp A \pandp \pwdiawp C) \Big)$

\DisplayProof
 \\
\end{tabular}
\end{center}

\noindent However, in each case we  reach a dead end.

\begin{itemize}
\item Case 2: (Exchange or) Weakening.
\end{itemize}

As an intermediate step, we can try to isolate an immediate substructure of $X$ by applying backward Weakening. By directly applying Weakening, we obtain $$\pwboxwp A \fCenter \PWCIRCWP \PBCIRCBP \Big( \pwdiawp \pbboxbp (\pwdiawp A \pandp \pwdiawp B) \PANDORP \pwdiawp \pbboxbp (\pwdiawp A \pandp \pwdiawp C) \Big),$$ and by applying Exchange and Weakening, we obtain $$\pwboxwp \pbdiabp (\pwboxwp B \porp \pwboxwp C)  \fCenter \PWCIRCWP \PBCIRCBP \Big( \pwdiawp \pbboxbp (\pwdiawp A \pandp \pwdiawp B) \PANDORP \pwdiawp \pbboxbp (\pwdiawp A \pandp \pwdiawp C) \Big).$$

In each subcase, this choice leads us to a dead end. Indeed, we preliminarily observe that the second subcase can be reduced to the first one by expanding the tree as follows:

\begin{center}
\begin{tabular}{c}
\!\!\!\!\!\!\!\!\!\!\!\!\!\!\!\!\!\!\!\!\!\!\!\!\!\!\!\!
\AXC{$\!\!\wn\wn$}
\noLine
\UIC{$\!\!\vdots$}
\noLine

\UI$\pwboxwp B\fCenter \PWCIRCWP \PBCIRCBP \Big( \pwdiawp \pbboxbp (\pwdiawp A \pandp \pwdiawp B) \PANDORP \pwdiawp \pbboxbp (\pwdiawp A \pandp \pwdiawp C) \Big)$

\AXC{$\!\!\wn\wn$}
\noLine
\UIC{$\!\!\vdots$}
\noLine

\UI$\pwboxwp C\fCenter \PWCIRCWP \PBCIRCBP \Big( \pwdiawp \pbboxbp (\pwdiawp A \pandp \pwdiawp B) \PANDORP \pwdiawp \pbboxbp (\pwdiawp A \pandp \pwdiawp C) \Big)$

\BI$\pwboxwp B \porp \pwboxwp C \fCenter \PWCIRCWP \PBCIRCBP \Big( \pwdiawp \pbboxbp (\pwdiawp A \pandp \pwdiawp B) \PANDORP \pwdiawp \pbboxbp (\pwdiawp A \pandp \pwdiawp C) \Big) \PANDORP \PWCIRCWP \PBCIRCBP \Big( \pwdiawp \pbboxbp (\pwdiawp A \pandp \pwdiawp B) \PANDORP \pwdiawp \pbboxbp (\pwdiawp A \pandp \pwdiawp C) \Big)$

\UI$\pwboxwp B \porp \pwboxwp C \fCenter \PWCIRCWP \PBCIRCBP \Big( \pwdiawp \pbboxbp (\pwdiawp A \pandp \pwdiawp B) \PANDORP \pwdiawp \pbboxbp (\pwdiawp A \pandp \pwdiawp C) \Big)$

\UI$\PBCIRCBP (\pwboxwp B \porp \pwboxwp C) \Big) \fCenter \PBCIRCBP \Big( \pwdiawp \pbboxbp (\pwdiawp A \pandp \pwdiawp B) \PANDORP \pwdiawp \pbboxbp (\pwdiawp A \pandp \pwdiawp C) \Big)$

\UI$\pbdiabp (\pwboxwp B \porp \pwboxwp C) \Big) \fCenter \PBCIRCBP \Big( \pwdiawp \pbboxbp (\pwdiawp A \pandp \pwdiawp B) \PANDORP \pwdiawp \pbboxbp (\pwdiawp A \pandp \pwdiawp C) \Big)$

\UI$\pbdiabp (\pwboxwp B \porp \pwboxwp C) \Big) \fCenter \PBCIRCBP \Big( \pwdiawp \pbboxbp (\pwdiawp A \pandp \pwdiawp B) \PANDORP \pwdiawp \pbboxbp (\pwdiawp A \pandp \pwdiawp C) \Big)$

\UI$\pwboxwp \pbdiabp (\pwboxwp B \porp \pwboxwp C) \Big) \fCenter \PWCIRCWP \PBCIRCBP \Big( \pwdiawp \pbboxbp (\pwdiawp A \pandp \pwdiawp B) \PANDORP \pwdiawp \pbboxbp (\pwdiawp A \pandp \pwdiawp C) \Big)$
\DisplayProof
 \\
\end{tabular}
\end{center}

As to the proof of first subcase, let us preliminarily perform the following steps:

\begin{center}
\begin{tabular}{c}
\AXC{$\!\!\wn\wn$}
\noLine
\UIC{$\!\!\vdots$}
\noLine

\UI$\PWCIRCWP A \fCenter \pwdiawp \pbboxbp (\pwdiawp A \pandp \pwdiawp B) \PANDORP \pwdiawp \pbboxbp (\pwdiawp A \pandp \pwdiawp C)$

\UI$A \fCenter \PBCIRCBP \Big( \pwdiawp \pbboxbp (\pwdiawp A \pandp \pwdiawp B) \PANDORP \pwdiawp \pbboxbp (\pwdiawp A \pandp \pwdiawp C) \Big)$

\UI$\pwboxwp A \fCenter \PWCIRCWP \PBCIRCBP \Big( \pwdiawp \pbboxbp (\pwdiawp A \pandp \pwdiawp B) \PANDORP \pwdiawp \pbboxbp (\pwdiawp A \pandp \pwdiawp C) \Big)$
\DisplayProof
 \\
\end{tabular}
\end{center}

Again, we are in a situation in which we can act on the structure $Y$ only via Exchange, Weakening or Residuation, and also in this case any option leads us to a dead end. Indeed:

\begin{itemize}
\item[-] Case 2.1: Exchange or Weakening.

As an intermediate step, we can try to delete one of the immediate substructures of $Y$. By applying Weakening or, respectively, Exchange and Weakening, we obtain $$\PWCIRCWP A \fCenter \pwdiawp \pbboxbp (\pwdiawp A \pandp \pwdiawp B) \quad \textrm{and} \quad \PWCIRCWP A \fCenter \pwdiawp \pbboxbp (\pwdiawp A \pandp \pwdiawp C).$$ In each case, we reach a dead end,  as we show below:

\begin{center}
\begin{tabular}{cc}
\AXC{$\!\!\wn$}
\noLine
\UIC{$\!\!\vdots$}
\noLine

\UI$\PWCIRCWP A \fCenter \pwdiawp A \pandp \pwdiawp B$
\UI$A \fCenter \PBCIRCBP (\pwdiawp A \pandp \pwdiawp B)$
\UI$A \fCenter \pbboxbp (\pwdiawp A \pandp \pwdiawp B)$
\UI$\PWCIRCWP A \fCenter \pwdiawp \pbboxbp (\pwdiawp A \pandp \pwdiawp B)$
\DisplayProof
 &
\AXC{$\!\!\wn$}
\noLine
\UIC{$\!\!\vdots$}
\noLine

\UI$\PWCIRCWP A \fCenter \pwdiawp A \pandp \pwdiawp C$
\UI$A \fCenter \PBCIRCBP (\pwdiawp A \pandp \pwdiawp C)$
\UI$A \fCenter \pbboxbp (\pwdiawp A \pandp \pwdiawp C)$
\UI$\PWCIRCWP A \fCenter \pwdiawp \pbboxbp (\pwdiawp A \pandp \pwdiawp C)$
\DisplayProof

 \\
\end{tabular}
\end{center}

\item[-] Case 2.2: Residuation.
As an intermediate step, we can try to isolate any of the substructures of $Y$ via Residuation, as follows:
\begin{center}
\begin{tabular}{c}
\AXC{$\!\!\wn$}
\noLine
\UIC{$\!\!\vdots$}
\noLine
\UI$\pwdiawp \pbboxbp (\pwdiawp A \pandp \pwdiawp B) \PARRP \PWCIRCWP A \fCenter   \pwdiawp \pbboxbp (\pwdiawp A \pandp \pwdiawp C)$

\UI$\PWCIRCWP A \fCenter \pwdiawp \pbboxbp (\pwdiawp A \pandp \pwdiawp B) \PANDORP \pwdiawp \pbboxbp (\pwdiawp A \pandp \pwdiawp C)$
\DisplayProof
 \\
\end{tabular}
\end{center}
or via Exchange and Residuation, as follows:
\begin{center}
\begin{tabular}{c}
\AXC{$\!\!\wn$}
\noLine
\UIC{$\!\!\vdots$}
\noLine
\UI$\pwdiawp \pbboxbp (\pwdiawp A \pandp \pwdiawp C)\PARRP\PWCIRCWP A \fCenter  \pwdiawp \pbboxbp (\pwdiawp A \pandp \pwdiawp B) $

\UI$\PWCIRCWP A \fCenter \pwdiawp \pbboxbp (\pwdiawp A \pandp \pwdiawp C)\PANDORP \pwdiawp \pbboxbp (\pwdiawp A \pandp \pwdiawp B) $

\UI$\PWCIRCWP A \fCenter \pwdiawp \pbboxbp (\pwdiawp A \pandp \pwdiawp B) \PANDORP \pwdiawp \pbboxbp (\pwdiawp A \pandp \pwdiawp C)$
\DisplayProof
 \\
\end{tabular}
\end{center}

However, in each case we reach a dead end.
\end{itemize}

\end{document}